\numberwithin{equation}{section}
\def\N{\mathbb{N}}
\newtheorem{theorem}{Theorem}[section]
\newtheorem{definition}[theorem]{Definition}
\newtheorem{lemma}[theorem]{Lemma}
\newtheorem{corollary}[theorem]{Corollary}
\newtheorem{proposition}[theorem]{Proposition}
\newtheorem{remark}[theorem]{Remark}
\newcommand{\s}{\section}
\newcommand{\R}{\mathbb R}
\newcommand{\lab}{\label}
\newcommand{\bt}{\begin{theorem}}
	\newcommand{\et}{\end{theorem}}
\newcommand{\bl}{\begin{lemma}}
	\newcommand{\el}{\end{lemma}}
\newcommand{\bd}{\begin{definition}}
	\newcommand{\ed}{\end{definition}}
\newcommand{\bc}{\begin{corollary}}
	\newcommand{\ec}{\end{corollary}}
\newcommand{\bp}{\begin{proof}}
	\newcommand{\ep}{\end{proof}}
\newcommand{\bx}{\begin{example}}
	\newcommand{\ex}{\end{example}}
\newcommand{\bi}{\begin{exercise}}
	\newcommand{\ei}{\end{exercise}}
\newcommand{\bo}{\begin{proposition}}
	\newcommand{\eo}{\end{proposition}}
\newcommand{\br}{\begin{remark}}
	\newcommand{\er}{\end{remark}}
\newcommand{\beq}{\begin{equation}}
	\newcommand{\eeq}{\end{equation}}
\newcommand{\ba}{\begin{align}}
	\newcommand{\ea}{\end{align}}
\newcommand{\bn}{\begin{enumerate}}
	\newcommand{\en}{\end{enumerate}}
\newcommand{\bg}{\begin{align*}}
	\newcommand{\bcs}{\begin{cases}}
		\newcommand{\ecs}{\end{cases}}
	\newcommand{\bean}{\begin{eqnarray*}}
		\newcommand{\eean}{\end{eqnarray*}}
	\def\N{\mathbb{N}}
	\def\R{\mathbb{R}}
	\def\bd{\mathrm{bd}\,}
	\definecolor{viola}{rgb}{1,0,1}
	\title[Uniqueness]{Uniqueness of radial solutions for $m$-Laplacian equations in low dimensions}
    \author[P.~Pucci]{Patrizia Pucci}
	\author[J.~J.~Zhang]{Jianjun Zhang}
	\author[X.~X.~Zhong]{Xuexiu Zhong}
\address[P.~Pucci]{\newline\indent Dipartimento di Matematica e Informatica
\newline\indent
Universit\`a degli Studi di Perugia
\newline\indent
via Vanvitelli 1, 06123 Perugia, Italy}
\email{\href{mailto:patrizia.pucci@unipg.it}{patrizia.pucci@unipg.it}}
\address[J.~J.~Zhang]{\newline\indent College of Mathematics and Statistics
\newline\indent
Chongqing Jiaotong University
\newline\indent
Xuefu, Nan'an, 400074, Chongqing, PR China}
\email{\href{mailto:zhangjianjun09@tsinghua.org.cn}{zhangjianjun09@tsinghua.org.cn}}
	\address[X.~X.~Zhong]{\newline\indent South China Research Center for Applied Mathematics and Interdisciplinary Studies \& School of Mathematical Sciences
		\newline\indent
		South China Normal University
		\newline\indent
		Guangzhou 510631, P. R. China}
	 \email{\href{mailto:zhongxuexiu1989@163.com}{zhongxuexiu1989@163.com}}
\subjclass[2000]{~35A15, 35J62.}
\date{\today}
\keywords{Quasilinear Schr\"odinger equation, Positive radial solution, Uniqueness}
\begin{document}

\begin{abstract}
    {This paper extends the uniqueness results of Serrin and Tang [\textit{Indiana Univ. Math. J.}, 49 (2000), pp. 897--923] to the low-dimensional case $1\leq N\leq m$ with $m>1$. We consider radial solutions of the overdetermined problem
    \[
    \begin{cases}
        -\Delta_m u = f(u), \quad u>0 & \text{in } B_R,\\[4pt]
        u = \partial_\nu u = 0 & \text{on } \partial B_R, \text{ if } R<\infty,\\[4pt]
        \displaystyle\lim_{|x|\to\infty} u(x)=0, & \text{if } R=\infty,
    \end{cases}
    \]
    where $B_R$ is the open ball in $\mathbb{R}^N$ centered at the origin with radius $R>0$ (the case $R=\infty$ corresponds to the whole space, for studying positive ground states). Under suitable assumptions on the nonlinearity $f$, we establish the uniqueness of such solutions, whenever they exist.}

    {Our analysis is motivated by connections to sharp forms of the Gagliardo--Nirenberg and Nash inequalities. Although the overall framework follows that of Serrin and Tang, the details of our proofs differ substantially in the low-dimensional setting. In particular, Serrin and Tang explicitly noted that their techniques rely heavily on the condition $N>m$ and do not readily extend to $N\leq m$ (see Subsection~6.2 of their work). The present paper closes this gap, thereby providing a complete uniqueness theory for all dimensions.}

    {As a concrete example, for the canonical nonlinearity $f(u) = -u^p + u^q$ with $p<q$, our result covers the full range $-1 < p < q < m^*-1$, where $m^*:=\frac{Nm}{N-m}$ for $N>m$ and $m^* = \infty$ for $N\leq m$. Consequently, our work also completely resolves an open problem posed by Pucci and Serrin [\textit{Indiana Univ. Math. J.}, 47 (1998), pp. 501--528],  which had been settled for $N>m$ in the earlier work of Serrin and Tang.}
\end{abstract}

\maketitle
	
\tableofcontents

\section{Introduction}
{For $R>0$, let $B_R$ denote the open ball in $\mathbb{R}^N$ centered at the origin with radius $R$ (when $R=\infty$, $B_R$ represents the entire space $\mathbb{R}^N$).
We consider radial solutions of the following overdetermined problem:
\begin{equation}\label{eq:20240919-1341}
\begin{cases}
-\Delta_m u = f(u), \quad u>0 & \text{in } B_R,\\[4pt]
u = \partial_\nu u = 0 & \text{on } \partial B_R, \text{ if } R<\infty,\\[4pt]
\displaystyle\lim_{|x|\to\infty} u(x)=0, & \text{if } R=\infty,
\end{cases}
\end{equation}
where $\Delta_m u = \operatorname{div}\bigl(|\nabla u|^{m-2}\nabla u\bigr)$ is the $m$-Laplace operator.
There are two mutually exclusive situations:
\begin{itemize}
    \item[(1)] \textbf{Regular case.} $f$ is continuous on $[0,\infty)$;
    \item[(2)] \textbf{Singular case.} $f$ cannot be extended to a continuous function on $[0,\infty)$.
\end{itemize}
For a solution $u$ of \eqref{eq:20240919-1341} with $R=\infty$, it is a positive ground state.
When $R<\infty$, we extend $u$ by zero outside $B_R$; then $u$ is a ground state if and only if $f$ is regular and can be continuously extended to $0$ by defining $f(0)=0$.
Here, a \textit{ground state} $u$ means that $u$ is a non-negative, non-trivial $C^1$ distributional solution of $-\Delta_m u = f(u)$ in $\mathbb{R}^N$ that tends to zero at infinity.}

When $1<m<N$, under the following assumptions:
{\it\begin{enumerate}[label=$\mathbf{(H_{\arabic*})}$]
\item $f$ is continuous on $(0,\infty)$, with $f\leq 0$ on $(0,b]$ and $f(u)>0$ for $u>b$, for some
$b>0$;\label{hcon_1}
\item $f\in C^1(b,\infty)$, with $g(u)=uf'(u)/ f(u)$ non-increasing on $(b,\infty)$,\label{hcon_2}
\end{enumerate}}
Serrin and Tang in \cite[Theorem 2]{SeT} proved that the Dirichlet-Neumanna free boundary problem \eqref{eq:20240919-1341} admits at most one radial solution. In particular, Serrin and Tang stated out clearly in \cite[Subsection~6.2]{SeT} that {\it``the proofs in the present paper rely extensively on the assumption $N>m$ and cannot be extended easily to values $N\leq m$."} This remark motivates us to extend their results to the  case $1\leq N\leq m$ and $m>1$.

Another {motivation} of this work is related to {\it the sharp Gagliardo-Nirenberg/Nash inequality}, which plays a crucial role in the study of partial differential equations (PDEs); in particular in the field of normalized solution problems.
\br\lab{remark:20241112-0953} {\rm
In \cite{Agu}, Martial Agueh established sharp Gagliardo-Nirenberg/Nash's inequalities:
\begin{gather}\lab{eq:20241112-0957}
\|u\|_{s}\leq K_{opt} \|\nabla u\|_{m}^{\theta} \|u\|_{q}^{1-\theta} \mbox{ for all $u\in Y^{m,q}(\R^N)$,
 where}\\
\theta=\dfrac{Nm(s-q)}{s[Nm-q(N-m)]},\; K_{opt}>0,\;
Y^{m,q}(\R^N):=\{u\in L^q(\R^N)\,:\, \nabla u\in L^m(\R^N)\},\nonumber
\end{gather}
and either
\begin{gather}\lab{eq:20241112-1118}
1<m<N,~\hbox{and}~1\leq q<s<m^*:=\frac{Nm}{N-m}
\quad\mbox{or}\\
\lab{eq:20241112-1119}
N=1,\; m>1,~\hbox{and}~1\leq q<s<\infty.
\end{gather}
The optimal constant $K_{opt}$ and optimal functions of the Gagliardo-Nirenberg/Nash inequality \eqref{eq:20241112-0957} are obtained by explicitly determining the minimum value of the variational problem:
\beq\lab{eq:20241112-1004}
\inf\left\{E(u):=\frac{1}{m}\|\nabla u\|_m^m +\frac{1}{q}\|u\|_q^q\,:\, u\in Y^{m,q}(\R^N),\; \|u\|_s=1\right\},
\eeq
which requires explicitly solving the $m$-Laplacian  equation:
\beq\lab{eq:20241112-1007}
-\Delta_m u+|u|^{q-2}u=\lambda |u|^{s-2}u,
\eeq
where $\lambda$ denotes the Lagrange multiplier for the constraint $\|u\|_s=1$. {The existence part can be established via variational principles. By employing a minimizing sequence, rearrangement inequalities, and the compact embedding theorem for radial functions, it is straightforward to show the compactness of the minimizing sequence. This procedure is standard. However, the uniqueness of this extremal function, or even the uniqueness of positive solutions to \eqref{eq:20241112-1007}, poses a nontrivial mathematical challenge. }
Martial Agueh\cite{Agu} applied the results of Serrin-Tang \cite{SeT} to show that $u_\infty$ is the unique radial, non-negative and non-trivial smooth solution of PDE \eqref{eq:20241112-1007}. Since the paper \cite{SeT} deals only with the case $1<m<N$, Martial Agueh also assumed $1<m<N$ in \eqref{eq:20241112-1118}.}
\er

 Take $m=2$ and $q=1$, as an example, and let $2<p<2\cdot 2^*$, $p=2s$. If $u\in L^1(\R^N)$ and $\nabla u\in L^2(\R^N)$,
 that is $u\in Y^{2,1}(\R^N)$, then the Gagliardo-Nirenberg/Nash inequality becomes
 \beq\lab{eq:20250107-1055}
\int_{\R^N}|u|^{\frac{p}{2}}\,{\rm d}x
\leq\frac{C(N, p)}{\|Q_p\|_{L^1(\R^N)}^{\frac{p-2}{N+2}}}
\left(\int_{\R^N} |\nabla u|^2\,{\rm d}x\right)^{\frac{N(p-2)}{2(N+2)}}
\left(\int_{\R^N} |u|\,{\rm d}x\right)^{\frac{4N-p(N-2)}{2(N+2)}},
\eeq
where $Q_p$ is the unique non-negative radial solution of the  Dirichlet-Neumann free boundary problem:
\beq\lab{eq:20250107-1054}
\begin{cases}
-\Delta u+1=u^{\frac{p}{2}-1},\ \ u>0~ &\mbox{in}~B_R,\\
u=\partial_\nu u=0~&\mbox{on}~\partial B_R.
\end{cases}
\eeq
{We emphasize that for $m=q=2$ and $2<s<2^*$ (where $2^*=\frac{2N}{N-2}$ for $N\geq 3$ and $2^*=\infty$ for $N=1,2$), Weinstein\cite{Weinstein-1983} first revealed a fundamental connection between finding a ground state of \eqref{eq:20241112-1007} and determining the best constant in the corresponding Gagliardo-Nirenberg inequality.}

\br\lab{Remark:20250107-1105} {\rm
Many papers cite \cite{Agu} and apply the sharp Gagliardo-Nirenberg/Nash inequality of \cite{Agu} in all dimensions. However, as already noted in
Remark~\ref{remark:20241112-0953}, when $1\leq N\leq m$ with $m>1$ requires an additional proof. {This is particularly relevant in situations requiring the uniqueness of the extremal function associated with the Gagliardo-Nirenberg/Nash inequality.}  Therefore, in order to apply the sharp Gagliardo-Nirenberg/Nash inequality rigorously and confidently in future in all dimension $N\geq 1$, we provide here the proof of  the uniqueness of $u_\infty$, when $1\leq N\leq m$ and $m>1$.}
\er

\br\lab{remark:20250107-1113} {\rm
In \cite{Jeanjean-Zhang-Zhong-2025}, Jeanjean, Zhang and Zhong considered the mass super critical case and proved the optimal existence of energy ground states, with the energy functional given by
$$ I(u) = \frac{1}{2}\int_{\R^N}|\nabla u|^2\mathrm{d}x + \int_{\R^N}u^2|\nabla u|^2\mathrm{d}x - \frac{1}{p}\int_{\R^N}|u|^p\mathrm{d}x$$
subject to the constraint in the set
$$\mathcal{S}_a=\{ u \in X\, :\, \int_{\R^N}| u|^2\mathrm{d}x = a\},$$
where $a>0$ is given, and
\begin{equation*}
X:=\left\{u\in H^1(\R^N)\,:\, \int_{\R^N}u^2|\nabla u|^2\mathrm{d}x<\infty\right\}.
\end{equation*}
They also captured the asymptotic behavior of these ground states as the mass $a \downarrow 0$, and $a \uparrow a^*$, where $a^*=\infty$ if
$1\leq N\leq 4$, and $a^*=a_0$ if $N\geq 5$. In particular, when $a \downarrow 0$ (that is when the corresponding Lagrange multiplier $\lambda_a\rightarrow \infty$), they discovered some new interesting phenomenon. Specifically, after appropriate transformations of the ground states, they converge to a function $\tilde{v}$ determined by the problem
\beq\lab{eq:20240905-1912}
\begin{cases}
-\Delta \tilde{v}=-\frac{\sqrt{2}}{2}+2^{\frac{p-4}{4}}
\tilde{v}^{\frac{p-2}{2}},~ \tilde{v}>0~&\hbox{in}~B_R,\\
\tilde{v}=\partial_\nu \tilde{v}=0~&\hbox{on}~\partial B_R.
\end{cases}
\eeq
Our result in the present paper provides a theoretical support for the uniqueness, particularly in the one-dimensional case.}
\er

In \cite[Theorem 3]{Pucci-Serrin1998}, Pucci and Serrin considered a more general class of quasilinear elliptic operators, including the $m$-Laplace operator, and their results also cover the case $N=m$. Specifically, for $1<m<\infty$, $2\leq N<\infty$, they proved
that the Dirichlet-Neumanna free boundary problem \eqref{eq:20240919-1341} admits at most one radial ground state, provided that $f$ satisfies the following assumptions:
{\it\begin{enumerate}[label=$\mathbf{(H'_{\arabic*})}$]
\item  \label{H-p-1}$f$ is continuously differentiable on $(0,\infty)$;
\item  \label{H-p-2}there exists $b>0$ such that $f(b)=0$ and
$$
f(u)<0~\hbox{for}~0<u<{b},\quad
f(u)>0~\hbox{for}~{b}<u<\infty;
$$
\item  \label{H-p-3}$f$ is locally integrable on $[0,\infty)$. In particular, the integral $F(u)=\int_0^u f(\tau)\mathrm{d}\tau$ then exists, and $F(u)\rightarrow 0$ as $u\rightarrow 0+$ (note that this applies even if $f(u)\rightarrow -\infty$ as $u\rightarrow 0+$);
\item  \label{H-p-4}$\displaystyle \frac{\mathrm{d}}{\mathrm{d}u}\left(\frac{F(u)}{f(u)}\right)\geq \frac{N-m}{Nm}$ for $u>0, u\neq b$.
\end{enumerate}}

 In \cite{Tang-2001} Tang fully clarified uniqueness and global structure of positive radial solutions for
  the $m$-Laplace equation. Adachi and Shibata in \cite{Adachi2018} also analyzed uniqueness and the non-degeneracy of positive radial solutions for semi-linear elliptic problems. However, the authors of \cite{Tang-2001} and \cite{Adachi2018} only considered the case $N\geq 2$ and assumed that $f(0)=0$, which is not required in our assumptions. When $N=1$, Berestycki and Lions in \cite[Theorem~5]{BL} provided necessary and sufficient conditions for the existence of solution for a certain class of semi-linear elliptic problems. However, they covered only the case $f(0)=0$, with  proof techniques not applicable when $f(0)<0$.

Based on the reasons outlined in Remarks~\ref{remark:20241112-0953}--\ref{remark:20250107-1113}, we extend here some theorems in~\cite{SeT} to the case when $1\le N\leq m$ and $m>1$. {Specifically, we introduce the following additional condition:
{\it\begin{enumerate}[label=$\mathbf{(H_{3})}$]
\item When $2\leq N\leq m$, $g(u)>-1$ for $u>b$.\label{hcon_3}
\end{enumerate}}
Then our main result can be stated as follows:
\bt\lab{th:20250107-1127}
Let $m>1$, $1\leq N\leq m$ and let \ref{hcon_1} hold.
\begin{itemize}
\item[(i)] If $N=1$ and \eqref{eq:20240919-1341} admits a radial solution $u$, then $u$ is unique and $u(0)>b$ is uniquely determined by $F(u(0))=0$.
\item[(ii)] Assume further that \ref{hcon_2}-\ref{hcon_3} hold if $N\geq 2$. Then the free boundary problem \eqref{eq:20240919-1341} admits at most one radial solution.
\end{itemize}
\et
}

\begin{remark}\label{remark:20260123-1341}
    \rm{
        The function $g(u) := u f'(u)/f(u)$ is known as the \textit{order function} of $f$.
        The additional condition \ref{hcon_3} is mild and easily verifiable.
        Since \ref{hcon_2} implies that $g$ is non-increasing on $(b, \infty)$, it suffices to determine the asymptotic order of $f$ at infinity.
        In particular, if $f$ has an order greater than $-1$ at infinity, then condition \ref{hcon_3} is satisfied.
    }
\end{remark}

{A key example is the canonical nonlinearity $f(u) = -u^p + u^q$ with $p < q$.
To ensure the existence of radial solutions for \eqref{eq:20240919-1341}, it is natural to impose the further condition (see \cite[condition (H3)]{SeT}):
$$
\int_0 |f(s)| \, \mathrm{d}s < \infty \quad \text{and} \quad F(u) = \int_0^u f(s) \, \mathrm{d}s < 0 \quad \text{for small } u,
$$
which requires $p>-1$.}

{Let $m^* = \frac{mN}{N-m}$ for $N > m$ and $m^* = \infty$ for $N \leq m$.
In the Laplacian case (i.e., $m = 2$), Coffman \cite{Coffman-1972} studied the uniqueness of ground states for \eqref{eq:20240919-1341} when $p = 1$, $q = 3$, and $N = 3$.
Mcleod and Serrin \cite{Mcleod-Serrin-1987} later extended Coffman's approach to obtain more general results.
For $1=p<q$, their results apply to all $q<\infty$ when $N=1,2$; however, for $3 \leq N \leq 7$, they only cover a partial range of $(1, 2^*-1)$, and did not address the case $N \geq 8$.
The entire range $1=p<q<2^*-1$ was subsequently resolved by Kwong \cite{Kwong-1989} for $N\geq 3$.}

{These techniques have been generalized to quasilinear problems; see \cite{Franchi-Lanconelli-Serrin-1996, Citti-1993, Pucci-Serrin1998} and references therein.
For the canonical nonlinearity $f(u) = -u^p + u^q$, uniqueness for \eqref{eq:20240919-1341} was established by Citti \cite{Citti-1993} for the range
$$
1 < m \leq 2, \quad m - 1 \leq p \leq 1, \quad m - 1 < q < m^* - 1.
$$
Additional ranges of $p$ and $q$ were obtained by Pucci and Serrin \cite{Pucci-Serrin1998} (even for more general operators).
Specifically, a partial range of $0 < p < q < m^* - 1$ was covered in \cite[Theorems 2 and 2']{Pucci-Serrin1998}, and later extended to part of $-1 < p < q < m^* - 1$ in \cite[Theorems 4 and 4']{Pucci-Serrin1998}.
This led Pucci and Serrin to pose the open question: \textit{whether uniqueness holds for the entire range $-1 < p < q < m^* - 1$}. The key point is that condition \ref{H-p-4} is not hold for $f(u)=-u^p+u^q$ in the entire range $-1 < p < q < m^* - 1$.}

{This question was resolved positively by \cite[Page 899, Corollary]{SeT} for $N > m$.
Note that $f$ has order $q$ at infinity, and since $q > p > -1$, condition \ref{hcon_3} is satisfied.
Therefore, our Theorem \ref{th:20250107-1127} also resolves this open question for $1 \leq N \leq m$ and $m > 1$.}

{
\begin{remark}\label{remark:20260124-1416}
 {\rm   A sign-changing radial solution of
    \begin{equation}\label{eq:20260124-1418}
        -\Delta u = f(u) \quad \text{in } \mathbb{R}^N, \quad u \in H^1(\mathbb{R}^N), \quad u \not\equiv 0,
    \end{equation}
    is called a \textit{bound state} (or \textit{excited state}).
    For $2\leq N\leq 4$, the uniqueness of such a solution having exactly $k$ zeros for $|x| > 0$ (referred to as the $k$-th bound state) was established by Cort\'azar, Garc\'ia-Huidobro, and Yarur \cite{CGHY-2011} under some suitable assumptions.
    In particular, for $f(u)=-u+|u|^{q-1}u$ with $1<q< 2^*-1$, their work establishes the uniqueness of the $1$-th bound state for \eqref{eq:20260124-1418} in dimensions two. For $N=q=3$ and $k\leq 20$, Cohen, Li and Schlag \cite{Cohen-Li-Schlag-2024} proved the uniqueness  of $k$-the bound state.
    Recently, for $f(u)=-u+|u|^{q-1}u$ with entire range $1<q< 2^*-1$ and all $k\in \mathbb{N}$, Tang \cite{Tang-2026} completely resolved the uniqueness of the $k$-th bound state for all dimensions $N\geq 3$, which gave a positive answer to the long-standing conjecture of Berestycki and Lions\cite{BL}.}
\end{remark}
}

\section{{Radial Solutions: Monotonicity and Boundary Behavior}}
The results established in this section are based only on assumption \ref{hcon_1}. The radial solutions of~\eqref{eq:20240919-1341} satisfies the equation in the form
{
\beq\lab{eq:20240919-1424}
\begin{cases}
(r^{N-1}u'|u'|^{m-2})'=-r^{N-1}f(u),\quad r\in [0,R),\\
u>0~\hbox{in}~[0,R),\quad u'(0)=0,\\
u(R)=u'(R)=0~\hbox{if}~R<\infty~\hbox{and}~\lim_{r\rightarrow \infty}u=0~\hbox{if}~R=\infty,
\end{cases}
\;\phantom{a}'=\frac{\mathrm{d}}{\mathrm{d}r}.
\eeq
}
Let us denote by $J=(0,R)$, {$R\in (0,\infty]$},  the maximal existence interval of $u$ on which $u>0$.

\bo\lab{prop:20240919-1425}Let $m>1$ and $1\leq N\leq m$.  It holds that
$\alpha=u(0)>b$ and $u'(r)<0$ as  long as $u(r)>0$ and $r>0$.
\eo
\bp
A result of this type when {$1<m<N$} was established in \cite[Proposition 2.1]{SeT}. However, the proof when $1\leq N\leq m$ is different.

Suppose that $u'(r_0)=0$ for some {$r_0\in[0,R)$} at which $u(r_0)>0$, {we state the following claim:}\\
 {\bf Claim 1:} $u_0:=u(r_0)>b$.

{\bf Case $N=1$:} Multiplying \eqref{eq:20240919-1424} by $u'$ and  integrating from $r_0$ to $R$,
{$$-\frac{m-1}{m}|u'(r)|^m\Big|_{r_0}^{R}=-\int_{r_0}^{R} (m-1) |u'|^{m-2} u'' u' \mathrm{d}r =\int_{r_0}^{R} f(u)u'\mathrm{d}r.$$
Since} $u'(r_0)=0$ and
$$\begin{cases}
u(R)=u'(R)=0,\quad ~&\hbox{if}~R<\infty,\\
\lim\limits_{r\rightarrow \infty}u(r)=\lim\limits_{r\rightarrow \infty}u'(r)=0,&\hbox{if}~R=\infty,
\end{cases}$$
we obtain that $F(u(r_0))=0$.
Let $r^*$ be the largest critical point of $u$ in $[0,R)$ (see Remark \ref{remark:20241119-1119} below). That is, $u'(r)\neq 0$ for all $r\in (r^*, R)$. Then, again $F(u(r^*))=0$. If $u(r^*)\leq b$, then $f(u(r))\leq 0$ for $r\in [r^*, R)$. However, $f(u(r))\not\equiv 0$ in $[r^*, R)$. If not, $u(r)\equiv u(r^*)$ on $[r^*, R)$, which contradicts the fact that $u(r)\rightarrow 0$ as $r\rightarrow R$. Thus, $\int_{r^*}^{R}f(u(r))\mathrm{d}r<0$. On the other hand,
$$0=u'(r^*)|u'(r^*)|^{m-2}=-\int_{r^*}^{R}(u' |u'|^{m-2})'\mathrm{d}r=\int_{r^*}^{R}f(u(r))\mathrm{d}r,$$
which is impossible. Therefore, $u(r^*)> b$.

Suppose that  $u(r_0)\leq b$. This and the facts that $u(r^*)>b$ and $u(r)\rightarrow 0$ as $r\rightarrow R$ imply the existence of some $\bar{r}_0\in (r^*, R)$ such that $u(\bar{r}_0)=u(r_0)$.
We define
\beq\lab{eq:20241119-1031}
\rho(r):=\frac{m-1}{m}|u'(r)|^m +\int_{u(r_0)}^{u(r)} f(s)\mathrm{d}s, ~r\in [r_0, \bar{r}_0].
\eeq
Then $\rho(r_0)=0$ and $\rho(\bar{r}_0)=\frac{m-1}{m}|u'(\bar{r}_0)|^m\geq 0$.
Now, in $[r_0, \bar{r}_0]$
\begin{align*}
\rho'(r)=&(m-1)|u'(r)|^{m-2}u'(r)u''(r)+f(u(r))u'(r)\\
=&u'(r)[(m-1)|u'(r)|^{m-2}u''(r)-\frac{N-1}{r}|u'(r)|^{m-2}u'(r)
-(m-1)|u'(r)|^{m-2}u''(r)],
\end{align*}
that is
\beq\lab{eq:20241119-1033}
\rho'(r)=-\frac{N-1}{r}|u'(r)|^m\leq 0.
\eeq
Therefore, $\rho'\equiv 0$ in $[r_0, \bar{r}_0]$, since $N=1$. In particular,  $\rho(r)\equiv \rho(r_0)$ in $[r_0, \bar{r}_0]$. Thus,
$\frac{m-1}{m}|u'(\bar{r}_0)|^m=\rho(\bar{r}_0)=\rho(r_0)=0$, which implies that $u'(\bar{r}_0)=0$, with $u(\bar{r}_0)>0$. This  contradicts the definition of $r^*$ and completes the proof of the Case $N=1$.

{\bf Case $N\geq 2$:} {To complete the proof of \textbf{Claim 1}, we now formulate our second claim.}

{\textbf{Claim 2:} $u(r)\leq u(r_0)$ for all $r>r_0$.} \\
 If not, there exists some $r_1>r_0$ such that $u(r_1)>u(r_0)>0$. Since $u(r)$ vanishes at $r=R$ or infinity, one can find some $\bar{r}_0>r_1$ such that $u(\bar{r}_0)=u(r_0)$. Let $\rho$ be the function defined by \eqref{eq:20241119-1031} above. Then by \eqref{eq:20241119-1033}, $\rho$ is non-increasing in $[r_0, \bar{r}_0]$. In particular, $u'\not\equiv 0$ in $[r_0, \bar{r}_0]$. Thus,
$$\rho(\bar{r}_0)=\rho(r_0)+\int_{r_0}^{\bar{r}_0}\rho'(r)\mathrm{d}r=-(N-1)\int_{r_0}^{\bar{r}_0} \frac{1}{r}|u'(r)|^m\mathrm{d}r<0.$$
This contradicts that $\rho(\bar{r}_0)=\frac{m-1}{m}|u'(\bar{r}_0)|^m\geq 0$ and proves {\bf Claim 2}.

In conclusion, if $0<u_0\leq b$, then {\bf Claim 2} shows that $u(r)\leq u_0\leq b$ for all $r>r_0$. Hence, $f(u(s))\leq 0$ for $s\in [r_0, r]$. Now, \eqref{eq:20240919-1424} and $u'(r_0)=0$ give
\beq\lab{eq:20240919-1457}
r^{N-1}u'(r)|u'(r)|^{m-2}=-\int_{r_0}^{r}s^{N-1}f(u(s))\mathrm{d}s.
\eeq
Thus, $u'(r)\geq 0$ for all $r>r_0$, which is clearly
impossible, since $u(r_0)>0$, with $u(r)\rightarrow 0$ as $r\rightarrow R\leq \infty$. Hence, {\bf Claim 1} is proved.
\vskip 0.2in

Since $u(0)=\alpha$ and $u'(0)=0$, it follows (by {\bf Claim 1}) as well that $\alpha>b$. By $u(r)\rightarrow 0$ as $r\rightarrow R\leq \infty$ and $u(0)=\alpha>b$, the value
\beq\lab{eq:20240919-1757}
r_2:=\sup\{s>0: u(r)>b~\hbox{for all}~r\in (0,s)\}\in (0,\infty)
\eeq
is well defined and clearly
$u(r_2)=b$. Integrating \eqref{eq:20240919-1424} from $0$ to $r$, we obtain that
\beq\lab{eq:20240919-1758}
r^{N-1}u'(r)|u'(r)|^{m-2}=-\int_0^r s^{N-1}f(u(s))\mathrm{d}s,
\eeq
since $u'(0)=0$.
The definition of $r_2$ yields that $u(r)> b$ for any $r\in (0, r_2)$. Hence,  \ref{hcon_1} gives that $f(u(r))>0$ for $r\in (0, r_2)$. Then, \eqref{eq:20240919-1758} implies that $u'<0$ in $(0, r_2)$.

{\bf Claim 1} again provides that $u'(r)\neq 0$ when $0<u(r)\leq b$. Therefore, $u'(r_2)<0$ and so $u'(r)<0$ as long as $0<u(r)<b$. In summary, $u'(r)<0$ as long as $u(r)>0$ and $r>0$. This completes the proof.
\ep

\br\lab{remark:20241119-1119} {\rm
For $N=1$, we note that $r^*$ is well defined. If not, there exists a sequence $(r_n)_n\subset [0,R)$ such that $r_n\uparrow R, u'(r_n)=0$. Since $u(r_n)>0$ and $u(r)\rightarrow 0$ as $r\rightarrow R$, there exists some $i\in \N$ such that $u(r)\in (0,b)$ for $r\in [r_i, R)$.
One can see that $f(u(r))\not\equiv 0$ on $[r_i,R)$. If not, $(u'|u'|^{m-2})'\equiv 0$ on $[r_i,R)$, which, combining with $u'(R)=0$, leads to $u'\equiv 0$. Thus $u\equiv const$ in $[r_i,R)$, which is impossible. Hence, there exists $j\geq i$ such that $u(r_j)>u(r_{j+1})$. By the same argument, it is easy to see that $f(u(r))\not\equiv 0$ on $[r_j, r_{j+1}]$.
Put
$$\rho(r):=\frac{m-1}{m}|u'(r)|^m +\int_{u(r_j)}^{u(r)} f(s)\mathrm{d}s, ~r\in [r_j, r_{j+1}].$$
Then, similar to \eqref{eq:20241119-1033}, we conclude that $\rho'\equiv 0$ in $[r_j, r_{j+1}]$ and so $\rho(r_{j+1})=\rho(r_{j})=0$.
However,
$$\rho(r_{j+1})=\frac{m-1}{m}|u'(r_{j+1})|^m +\int_{u(r_j)}^{u(r_{j+1})} f(s)\mathrm{d}s
=\int_{u(r_j)}^{u(r_{j+1})} f(s)\mathrm{d}s>0,$$
which is the desired contradiction.}
\er

\bo\lab{prop:20240919-1859} Let $m>1$ and  $N\geq 1$. Then
$\displaystyle \frac{|u'(r)|^{m-1}}{r}\rightarrow \frac{f({u(0)})}{N}$ as $r\downarrow 0$.
\eo
\bp
Indeed, the proof of \cite[Proposition 2.3]{SeT} is  valid for all $N\geq 1$ (not necessary $N>m$). Thus, we omit it here.
\ep

\s{An equivalent problem and some properties}
The results establish in this section are also based only on assumption \ref{hcon_1}.
Proposition~\ref{prop:20240919-1425} yields that $u'<0$ in $J=(0,R)$.
 In particular, $R=\infty$ if $u$ is positive in the whole space $\R^N$. Otherwise, $R<\infty$.

Since $u'<0$ in $J=(0,R)$, it follows that the inverse of $u=u(r)$, denoted by $r=r(u)$, is well defined and is strictly decreasing on $(0,\alpha)$. By \eqref{eq:20240919-1424}, we conclude that
\beq\lab{eq:20240919-1817}
(m-1)r''=\frac{N-1}{r}{r'}^2+f(u)|r'|^m r',~u\in (0,\alpha), \quad '=\frac{\mathrm{d}}{\mathrm{d}u}.
\eeq
{We observe that the inversion method was initially developed by Peletier and Serrin \cite{Peletier-Serrin-1983} in the context of the Laplacian problem. This approach has played a significant role in establishing the uniqueness of ground states and admits a natural extension to quasilinear elliptic equations; see \cite{Pucci-Serrin1998,SeT} and references therein. Further technical developments and related discussions in this area can be found in \cite{Tang-2026}.}

\bo\lab{prop:20240919-1819}Let $m>1$ and $1\leq N\leq m$. If $u$ is a solution of \eqref{eq:20240919-1424}, then
\begin{itemize}
       \item[$(i)$]
$\displaystyle{\lim_{u\rightarrow 0}\frac{r^{\frac{N-1}{m-1}}(u)}{|r'(u)|}=\lim_{r\rightarrow R}r^{\frac{N-1}{m-1}}|u'(r)|=0}$ and
\item[$(ii)$]
$\displaystyle{\lim_{u\rightarrow 0}ur^{\frac{N-m}{m-1}}(u)=\lim_{r\rightarrow R}r^{\frac{N-m}{m-1}}u(r)=0.}$
 \end{itemize}
\eo
\bp
Let us first prove $(ii)$. If $N=m$, the conclusion is trivial. If $1\le N<m$, with $R=\infty$, then $r^{\frac{N-m}{m-1}}u(r)\leq \alpha r^{\frac{N-m}{m-1}}\rightarrow 0$ as $r\rightarrow \infty$. If $1\le N<m$, with $R<\infty$, then $u(R)=0$ and $\displaystyle{\lim_{r\rightarrow R}r^{\frac{N-m}{m-1}}u(r)=R^{\frac{N-m}{m-1}}} u(R)=0$.

We now turn to the proof of $(i)$. If $R<\infty$, then $(i)$ follows from $u'(R)=0$ and so $R^{\frac{N-m}{m-1}}u(R)=0$. If $R=\infty$ and $N=1$, then $u'(r)\rightarrow 0$ as $r\rightarrow \infty$, which implies $(i)$. While, if $R=\infty$ with $2\leq N\leq m$, recalling that $(r^{N-1}u'|u'|^{m-2})'=-r^{N-1}f(u)\geq 0$ for $r>r_2$ with $u(r_2)=b$, we see that $r^{N-1}u'|u'|^{m-2}<0$ is nondecreasing on $(r_2,\infty)$. Then there exists some finite $\mu\geq 0$ such that $\lim\limits_{r\rightarrow \infty}r^{N-1}u'|u'|^{m-2}=-\mu$. We {\bf claim} that $\mu=0$. If not, $r^{N-1}u'|u'|^{m-2}\leq -\mu$ for all $r>r_2$. Thus, for $r>r_2$, it holds that $u'(r)\leq {-}\mu^{\frac{1}{m-1}}r^{-\frac{N-1}{m-1}}$ and
$$-\alpha<-u(r)=\int_r^\infty u'(s)\mathrm{d}s<-\mu^{\frac{1}{m-1}} \int_r^\infty s^{-\frac{N-1}{m-1}}\mathrm{d}s=-\infty ~\hbox{being $\frac{N-1}{m-1}\leq 1$}.$$
This contradiction shows that $\mu=0$ and concludes the proof of~$(i)$.
\ep

\bo\lab{prop:20240919-2007}
Let $m>1$ and $1\leq N\leq m$. Suppose that $u_1$ and $u_2$ are two distinct radial solutions to Eq.~\eqref{eq:20240919-1424}. Then, there is no $\bar{r}>0$ such that
\beq\lab{eq:20240919-2008}
u_1(\bar{r})=u_2(\bar{r})>0~\hbox{and}~u'_1(\bar{r})=u'_2(\bar{r}).
\eeq
In particular, $\alpha_1\neq \alpha_2$, where $\alpha_i:=u_i(0)$ for $i=1,2$.
\eo
\bp
 Both inverses $r_1(u)$, $r_2(u)$  satisfy \eqref{eq:20240919-1817} for $0<u<\min\{\alpha_1,\alpha_2\}$.  The RHS of \eqref{eq:20240919-1817} is locally Lipschitz continuous {in the dependent variables $r,r'$ (for $r>0, r'<0$)}. Now, if \eqref{eq:20240919-2008}
 holds at some $\bar{r}>0$, then
\beq\lab{eq:20240919-2014}
r_1(u)=r_2(u)=\bar{r}>0,\quad r'_1(u)=r'_2(u)
\eeq
at the ``initial" value $\bar{u}=u_1(\bar{r})=u_2(\bar{r})>0$. Hence, $r_1(u)\equiv r_2(u)$ for $0<u<\min\{\alpha_1, \alpha_2\}$ and in turn $\alpha_1=\alpha_2$. It follows that the corresponding intervals $J_1$ and $J_2$ for $u_1$ and $u_2$ are also the same, which contradicts the assumption that $u_1\not\equiv u_2$.

{The case of $\bar{r}=0$ requires separate consideration. Suppose that $\alpha_1=\alpha_2=:\alpha$. By Proposition \ref{prop:20240919-1425}, $\alpha>b$. Thus, for some small $\delta>0$, we have $u_1(r), u_2(r)>b$ on $[0,\delta)$. Although $f$ is only  continuous (not necessarily Lipschitz) on $(0,b)$,  assumption \ref{hcon_2} ensures Lipchitz continuous on $(b,\infty)$. Consequently, both $u_1$ and $u_2$ solve the initial value problem
\beq\lab{eq:20240919-2112}
\begin{cases}
(r^{N-1}u'|u'|^{m-2})'+r^{N-1}f(u)=0, \quad 0<r<\delta~\hbox{for some $\delta>0$},\\
u(0)=\alpha,\quad
u'(0)=0.
\end{cases}
\eeq
It follows that $u_1(r)\equiv u_2(r)$ on $[0,\delta)$. By earlier result for $\bar{r}>0$, we obtain the global identity $u_1\equiv u_2$ and $J_1=J_2$, contradicting the hypothesis $u_1\not\equiv u_2$.}
\ep

\s{Proof of Theorem \ref{th:20250107-1127} for $N=1$}
Now, we are ready to establish Theorem \ref{th:20250107-1127}
when $N=1$ under the solely assumption \ref{hcon_1}.

Suppose for contradiction that $u_1$ and $u_2$ are two distinct radial solutions. By Propositions~\ref{prop:20240919-1425} and~\ref{prop:20240919-2007}, without loss of generality, we assume that $b<\alpha_1<\alpha_2$. Let $r_1, r_2$ be the inverse of $u_1, u_2$, respectively. Put
\beq\lab{eq:20240919-2033}
B(u):=\frac{1}{|r'_1|^m}-\frac{1}{|r'_2|^m},\quad u\in(0,\alpha_1).
\eeq
Since $N=1$ and $r'(u)<0$, recalling \eqref{eq:20240919-1817}, we have
$$r''_i=\frac{1}{m-1}f(u){|r'_i|}^m r'_i={-\frac{1}{m-1}}f(u)|r'_i|^{m+1}.$$
A direct computation, combined with \eqref{eq:20240919-1817}, shows that
\begin{align*}
B'(u)=&-m \frac{r'_1 r''_1}{|r'_1|^{m+2}}+m \frac{r'_2 r''_2}{|r'_2|^{m+2}}\\
=&\frac{m r''_1(u)}{|r'_1(u)|^{m+1}} -\frac{m r''_2(u)}{|r'_2(u)|^{m+1}}\\
=&-{\frac{m}{m-1}}f(u)+{\frac{m}{m-1}}f(u)=0.
\end{align*}
Hence,
\beq\lab{eq:20240919-2045}
B\equiv \text{const},\;\;\mbox{ in }(0, \alpha_1).
\eeq
Letting $u\rightarrow 0^+$, we obtain that
$$B(u)\equiv \lim_{u\rightarrow 0^+}\left[\frac{1}{|r'_1|^m}-\frac{1}{|r'_2|^m}\right]=0.$$
This implies that $r'_1\equiv r'_2$ in  $(0,\alpha_1)$.
Thus,
\beq\lab{eq:20240919-2050}
r_1(u)-r_2(u)\equiv r_1(\alpha_1)-r_2(\alpha_1)=-r_2(\alpha_1)<0, \quad 0<u<\alpha_1.
\eeq
Hence, $J_1\subsetneqq J_2$, i.e., $R_1<R_2$. In particular, $u_2(r+r_2(\alpha_1))\equiv u_1(r)$ on $J_1$. Hence, $u'_1(0)=u'_2(r_2(\alpha_1))<0$. Thuis contradicts the fact that $u'_1(0)=0$.

{The equality $F(u(0))=0$ also holds, as established in the proof of Proposition \ref{prop:20240919-1425}.}
\hfill$\Box$

\br\lab{remark:20240924-1806}
{\rm Actually, the case $N=1$ can also be proved in another way.
Suppose by contradiction that $u_1$ and $u_2$ are two distinct radial solutions. By Propositions~\ref{prop:20240919-1425}
and~\ref{prop:20240919-2007}, we may assume that $b<\alpha_1<\alpha_2$. Then,  {$F(\alpha_i)=F(u_i(0))=0$, $i=1,2$,} which is impossible, since $f(u)>0$ for $u\in (\alpha_1, \alpha_2)$.}
\er

\s{Proof of Theorem \ref{th:20250107-1127} for $2\leq N\leq m$}
\subsection{{Two auxiliary functions for $2\leq N\leq m$: $A(u)$ and $P(u)$}}
Let us first introduce the function
\begin{equation}\label{A}
A(u)=(N-m)u+(m-1)\frac{r(u)}{r'(u)}, \quad u\in (0, \alpha).
\end{equation}
\begin{lemma}\label{lemma:20240919-1907}
    Let $2 \leq N \leq m$. Then $A(b) \leq 0$ and $A(u) \rightarrow 0$ as $u \downarrow 0$.
\end{lemma}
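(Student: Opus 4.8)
The plan is to prove the two assertions separately, working throughout with the inverse function $r=r(u)$, for which $r'(u)=1/u'(r(u))<0$ on $(0,\alpha)$ by Proposition~\ref{prop:20240919-1425}. For the inequality $A(b)\le 0$ I would argue directly by sign count. By Proposition~\ref{prop:20240919-1425} we have $\alpha=u(0)>b$, so $b\in(0,\alpha)$ and $A(b)$ is well defined; moreover $r(b)=r_2\in(0,\infty)$ is the radius at which $u=b$ (see \eqref{eq:20240919-1757}), and $u'(r_2)<0$. Since $m>1$ and $N\le m$, the term $(N-m)b$ is nonpositive, while $(m-1)\,r(b)/r'(b)=(m-1)\,r_2\,u'(r_2)<0$; hence $A(b)<0$. (An equivalent, slightly less direct route: a short differentiation of \eqref{A} using \eqref{eq:20240919-1817} gives $A'(u)=r(u)f(u)|r'(u)|^{m-1}$, which is $\le 0$ on $(0,b)$ by \ref{hcon_1}, so that $A(b)\le A(0^+)=0$ once the limit below is known.)

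For $A(u)\to 0$ as $u\downarrow 0$, since $(N-m)u\to 0$ trivially it suffices to show $r(u)/r'(u)=r(u)\,u'(r(u))\to 0$. Put $\rho:=r(u)$, so that $\rho\to R$ as $u\downarrow 0$. If $R<\infty$, the Neumann condition gives $u'(R)=0$, hence $\rho\,u'(\rho)\to R\cdot 0=0$. If $R=\infty$, I would use that $h(\rho):=\rho^{N-1}|u'(\rho)|^{m-1}=-\rho^{N-1}u'(\rho)|u'(\rho)|^{m-2}$ is nonincreasing on $(r_2,\infty)$: indeed, by \eqref{eq:20240919-1424}, $\bigl(\rho^{N-1}u'|u'|^{m-2}\bigr)'=-\rho^{N-1}f(u(\rho))\ge 0$ there, because $u(\rho)<b$ and $f\le 0$ on $(0,b]$ by \ref{hcon_1}. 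Consequently, for $\rho>2r_2$ and $s\in[\rho/2,\rho]$ one gets $|u'(s)|^{m-1}=h(s)\,s^{-(N-1)}\ge h(\rho)\,\rho^{-(N-1)}=|u'(\rho)|^{m-1}$, and therefore
\[
u(\rho/2)=\int_{\rho/2}^{\infty}|u'(s)|\,{\rm d}s\ \ge\ \int_{\rho/2}^{\rho}|u'(s)|\,{\rm d}s\ \ge\ \frac{\rho}{2}\,|u'(\rho)|.
\]
Letting $\rho\to\infty$ and using $u(\rho/2)\to 0$ yields $\rho\,|u'(\rho)|\to 0$, whence $r(u)/r'(u)\to 0$ and $A(u)\to 0$.

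The sign bookkeeping for $A(b)$ and the monotonicity of $h$ are routine. The one genuinely delicate point is the case $R=\infty$ with $2\le N<m$: there the estimate $r^{(N-1)/(m-1)}|u'(r)|\to 0$ coming from Proposition~\ref{prop:20240919-1819}$(i)$ is \emph{not} strong enough to conclude $r|u'(r)|\to 0$, since the exponent $(N-1)/(m-1)$ is strictly less than $1$. The comparison $\rho\,|u'(\rho)|\le 2\,u(\rho/2)$ above—exploiting the monotonicity of $r^{N-1}|u'|^{m-1}$ together with $u(\rho)\to 0$—is exactly what closes this gap, and it is the step I expect to require the most care.
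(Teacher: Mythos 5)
Your proof is correct, and on both assertions it departs from the paper's route in a way worth recording. For $A(b)\le 0$ the paper derives the bound indirectly: it first computes $A'(u)=-f(u)r|r'|^{m-2}r'\le 0$ on $(0,b)$, establishes $A(0^+)=0$, and then reads off $A(b)\le A(0^+)=0$ by monotonicity. You instead observe that both summands in $A(b)=(N-m)b+(m-1)r(b)/r'(b)$ are negative outright (the second because $r(b)>0$ and $r'(b)<0$), giving the strict inequality $A(b)<0$ with no reference to the limit at all; this is simpler and a bit stronger, and you correctly flag the paper's route as the alternative. For the limit $A(u)\to 0$, both arguments reduce to showing $r\,u'(r)\to 0$ as $r\to R$. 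When $R<\infty$ both proofs are immediate from $u'(R)=0$. The interesting divergence is at $R=\infty$: the paper fixes $\sigma:=\lim u'(r)r$ (which exists because $A$ is monotone on $(0,b)$), assumes $\sigma<0$, and derives a contradiction from the divergence of $u(r)-\tfrac{\sigma}{2}\log r$; you instead use the monotonicity of $\rho\mapsto\rho^{N-1}|u'(\rho)|^{m-1}$ on $(r_2,\infty)$, coming from \eqref{eq:20240919-1424} and $f\le 0$ on $(0,b]$, to get the direct quantitative bound $\rho|u'(\rho)|\le 2u(\rho/2)\to 0$. Your version is constructive rather than by contradiction, does not need the prior existence of the limit $\sigma$, and handles $N=m$ and $N<m$ uniformly (the paper splits these), at the modest cost of redoing a monotonicity observation already implicit in the proof of Proposition~\ref{prop:20240919-1819}. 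You have also correctly identified the real obstruction — that Proposition~\ref{prop:20240919-1819}$(i)$ gives decay of $r^{(N-1)/(m-1)}|u'|$ but not of $r|u'|$ when $N<m$ — which is precisely why this lemma needs its own argument.
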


\begin{proof}
   A direct computation yields
    \begin{equation}\label{eq:20240919-1922}
        \begin{aligned}
            A'(u) &= (N - m) + (m - 1) \frac{{r'}^2 - r r''}{{r'}^2} \\
                  &= N - 1 - \frac{r}{{r'}^2} (m - 1) r'' \\
                  &= N - 1 - \frac{r}{{r'}^2} \left[ \frac{N - 1}{r} {r'}^2 + f(u) |r'|^m r' \right] \quad \text{by \eqref{eq:20240919-1817}} \\
                  &= -f(u) r |r'|^{m - 2} r'.
        \end{aligned}
    \end{equation}
    Since $r(u) > 0$, $r'(u) < 0$, and $f(u) \leq 0$ for $u \in (0, b)$, it follows that $A'(u) \leq 0$ on $(0, b)$. Hence, $A$ is non-increasing on $(0, b)$ and the limit
    $$
    A(0) := \lim_{u \downarrow 0} A(u)
    $$
    is well defined.

    We now prove that $A(0) = 0$. By direct computation,
    \begin{equation}\label{eq:20250107-1848}
        \begin{aligned}
            A(0):=\lim_{u \downarrow 0} A(u) &= \lim_{u \downarrow 0} \left[ (N - m) u + (m - 1) \frac{r(u)}{r'(u)} \right] \\
            &= (m - 1) \lim_{u \downarrow 0} \frac{r(u)}{r'(u)} \\
            &= (m - 1) \lim_{u \downarrow 0} \left( \frac{r^{\frac{N - 1}{m - 1}}(u)}{r'(u)} \cdot r^{\frac{m - N}{m - 1}}(u) \right).
        \end{aligned}
    \end{equation}
    We consider the following three cases.

    \textbf{Case 1: $N = m$.}
    By Proposition~\ref{prop:20240919-1819}~(i), $\frac{r(u)}{|r'(u)|} \rightarrow 0$ as $u \downarrow 0$, so $A(u) \rightarrow 0$ as $u \downarrow 0$.

    \textbf{Case 2: $N < m$ and $R < \infty$.}
    Then $r^{\frac{m - N}{m - 1}}(u) \rightarrow R^{\frac{m - N}{m - 1}}$ as $u \downarrow 0$. By Proposition~\ref{prop:20240919-1819}~(i) again,
    \begin{equation}\label{eq:20250107-1854}
        \lim_{u \downarrow 0} A(u) = (m - 1)R^{\frac{m - N}{m - 1}}\lim_{u \downarrow 0}\frac{r^{\frac{N - 1}{m - 1}}(u)}{r'(u)} = 0.
    \end{equation}

    \textbf{Case 3: $N < m$ and $R = \infty$.} Recalling \eqref{eq:20250107-1848}, let
    \begin{equation}\label{eq:20251119-1950}
        \sigma := \lim_{u \downarrow 0} \frac{r(u)}{r'(u)} = \lim_{r \rightarrow \infty} u'(r) r=\frac{1}{m-1}A(0)
    \end{equation}
    and obviously $\sigma \leq 0$. Now, we show that $\sigma=0$. Otherwise, if $\sigma < 0$, then by continuity, there exists $R^* > 0$ such that
    \begin{equation}\label{eq:20251119-1952}
        u'(r) < \frac{\sigma}{2} r^{-1} \quad \text{for all } r \geq R^*.
    \end{equation}
    Let $h(r) := u(r) - \frac{\sigma}{2} \log r$ for $r \geq R^*$, then
    \[
    h'(r) = u'(r) - \frac{\sigma}{2} r^{-1} < 0,\,\,r \geq R^*,
    \]
    which implies that $h$ is strictly decreasing on $[R^*, \infty)$. Hence, $\lim_{r \rightarrow \infty} h(r) \leq h(R^*) < \infty$. However, since $u(r) \rightarrow 0$ and $\log r \rightarrow \infty$ as $r \rightarrow \infty$ (and $\sigma < 0$), we have
    \[
    \lim_{r \rightarrow \infty} h(r) = \lim_{r \rightarrow \infty} \left( u(r) - \frac{\sigma}{2} \log r \right) = +\infty,
    \]
   which is a contradiction. So $\sigma = 0$ and by \eqref{eq:20251119-1950} we get that $A(0) = (m - 1) \sigma = 0$.

    Since $A$ is non-increasing on $(0, b)$ and $A(0) = 0$, it follows that $A(b) \leq 0$.
\end{proof}

As in \cite{SeT}, for $a\in \R$, we introduce the function
$P$, defined for $0<u<\alpha$, by
\beq\lab{eq:20240919-1928}
P(u)=au\omega+r^N\left(\frac{m-1}{m}\frac{1}{|r'|^m}+\frac{a}{N}uf(u)
\right),
\eeq
where
\beq\lab{eq:20240919-1929}
\omega=\omega(u):=\frac{r^{N-1}}{r'|r'|^{m-2}}.
\eeq
Then we have the following property as \cite[Proposition 2.5]{SeT}.

\bo\lab{prop:20240919-2000} Let
$m>1$ and $1\leq N\leq m$. If $a\neq 0$,
\beq\lab{eq:20240919-1959}
P(u)=\left(a+1-\frac{N}{m}\right)\int_\alpha^u \omega(\tau)\mathrm{d}\tau+\frac{a}{N}\int_\alpha^u r(\tau)^N f(\tau)\left(g(\tau)-\frac{N-a}{a}\right)\mathrm{d}\tau,
\eeq
and if $a=0$,
\beq\lab{eq:20240921-1157}
P(u)=-\frac{N-m}{m}\int_\alpha^u \omega(\tau)\mathrm{d}\tau-\int_\alpha^u r(\tau)^N f(\tau)\mathrm{d}\tau,
\eeq
where $\alpha=u(0)$ and $0<u<\alpha$.
\eo
\bp
Recalling the definition of~$A(u)$ in~\eqref{A},
we rewrite $P$ as
\beq\lab{eq:20240919-1930}
P(u)=\left(a+1-\frac{N}{m}\right)u\omega(u) +\frac{1}{m}A(u)\omega(u) +\frac{a}{N}r^N u f(u).
\eeq
By \eqref{eq:20240919-1817} and  \eqref{eq:20240919-1929},
\beq\lab{eq:20240919-1935}
\begin{aligned}
w'(u)=&\frac{(N-1)r^{N-2}{|r'|}^m -r^{N-1}|r'|^{m-2}(m-1)r''}{{|r'|}^{2m-2}}\\
=&\frac{(N-1)r^{N-2}{|r'|}^m -r^{N-1}|r'|^{m-2}[\frac{N-1}{r}{r'}^2+f(u)|r'|^m r']}{{|r'|}^{2m-2}}\\
=&-f(u)r' r^{N-1}.
\end{aligned}
\eeq
Combing \eqref{eq:20240919-1922}, \eqref{eq:20240919-1930} with \eqref{eq:20240919-1935}, by a direct computation,
\begin{align*}
P'(u)=&\left(a+1-\frac{N}{m}\right)\omega+\left(a+1-\frac{N}{m}
\right)u\omega'
+\frac{1}{m}A'\omega+\frac{1}{m}A\omega'\\
&+\frac{a}{N}\left[Nr^{N-1}r' uf(u)+r^N f(u)+r^N uf'(u)\right]\\
=&\left(a+1-\frac{N}{m}\right)\omega(u) -r^Nf(u)+\frac{a}{N}r^Nf(u)+\frac{a}{N}r^N uf'(u).
\end{align*}
Noting that $u\rightarrow \alpha$ if and only if $r\rightarrow 0$. As $u\rightarrow \alpha$, i.e., $r\rightarrow 0$, by $0>u'(r)\rightarrow 0$, we have that $r'(u)\rightarrow -\infty$. Hence,
\beq\lab{eq:20240919-1952}
\omega(u)=\frac{r^{N-1}}{r'|r'|^{m-2}}\rightarrow 0,\quad \frac{r^N}{|r'|^m}\rightarrow 0~\hbox{ as }~r\rightarrow 0.
\eeq
Combining with \eqref{eq:20240919-1928}, we conclude that $P(\alpha)=0$.
{Thus}, if $a=0$, {then
$$P'(u)=\dfrac{m-N}{m}\omega(u)-r^Nf(u).$$ Hence,}
$P(u)=\int_\alpha^u P'(\tau)\mathrm{d}\tau$ implies \eqref{eq:20240921-1157}. {While,} if $a\neq 0$,
{since} $g(u):={uf'(u)/f(u)}$, {then}
\beq\lab{eq:20240919-1945}
P'(u)=\left(a+1-\frac{N}{m}\right)\omega(u) +\frac{a}{N}r^Nf(u)\left[g(u)-\frac{N-a}{a}\right].
\eeq
{In conclusion,} \eqref{eq:20240919-1959} is true, { as required.}
\ep

\subsection{{More preliminaries and a overview proof of Theorem \ref{th:20250107-1127} for $2\leq N\leq m$}}
In the following, let $u_1, u_2$ be two distinct radial solutions of Eq.~\eqref{eq:20240919-1424}. Put $\alpha_i=u_i(0)$.
Then, Proposition~\ref{prop:20240919-1425} implies
that $\alpha_i>b$, $i=1,2$. Let $r_i=r_i(u)$ be the inverse
of $u_i=u_i(r)$. Hence, $r_i=r_i(u)$ is defined
on $(0,\alpha_i)$, with $r_i(\alpha_i)=0$.

Without loss of generality, we suppose that $\alpha_1<\alpha_2$.
{For  $u\in (0,\alpha_1]$, we introduce more auxiliary functions $t,s,T$ and $S$, as in \cite{SeT}:}
\beq\lab{eq:20240920-0824}
\begin{aligned}
&t(u)=r_1(u)-r_2(u),\quad s(u)=\frac{r_1(u)}{r_2(u)},\\
&T(u)=\frac{r_1(u)}{r'_1(u)}-\frac{r_2(u)}{r'_2(u)}=\frac{1}{m-1}[A_1(u)-A_2(u)], \quad S(u)=\frac{\omega_1(u)}{\omega_2(u)},
\end{aligned}
\eeq
where $A_i$ is defined in~\eqref{A} for $r_i$ and $\omega_i$ is given in~\eqref{eq:20240919-1929} for $r_i$, $i=1,2$. {We also let $P_i$ be defined as in \eqref{eq:20240919-1928} after replacing $u$ by $u_i, i=1,2$ respectively.}
Here we see that
$$\dfrac{r_1(u)}{r'_1(u)}=0\mbox{ at }u=\alpha_1,\mbox{ since $r_1(\alpha_1)=0$ and }\lim\limits_{u\rightarrow \alpha_1}r'_1(u)=-\infty.$$

{The proof derives a contradiction by evaluating $P_1(u_c)-S(u_c)P_2(u_c)$ at a point $u_c\geq b$, defined in \eqref{eq:20240923-1132}, in two distinct ways. First, the original definition of $P$ in \eqref{eq:20240919-1928} shows via \eqref{eq:20240923-1402} that this expression is nonnegative. However, by employing an alternative, equivalent definition from Proposition~\ref{prop:20240919-2000}, we demonstrate that the same expression must be strictly negative. To establish negativity, we decompose the integral into a sum over carefully chosen intervals. Under assumption~\ref{hcon_3}, the sign of each constituent integral is analyzed, leading to the strict negative conclusion.}

Firstly, we establish the following fundamental properties for the auxiliary functions $t,s,T$ and $S$.
\begin{lemma}\label{lemma:20260123-2021}
Let $2\leq N\leq m$ and $t,s,T,S$ are defined by \eqref{eq:20240920-0824}. Then the following conclusions hold:
\begin{enumerate}[label=(\roman*)]
\item \label{prop:20240920-1125}
$t$ has no degenerate critical points in $(0,\alpha_1)$. Precisely, if $t'(u)=0$ {and} $t(u)>0$, then $t''(u)<0$. Similarly, if $t'(u)=0$ {and} $t(u)<0$, then $t''(u)>0$. Hence, $t$ has no positive local minimum (or negative local maximum) in $(0,\alpha_1)$.
\item \label{prop:20240920-1237} for all $u\in (0,\alpha_1)$
\beq\lab{eq:20240920-1240}
S'(u)=\left(\frac{r_1}{r_2}\right)^{N-1} \left(\frac{r'_2}{r'_1}\right)^{m-1}\!\! f(u) \big(|r'_2|^m-|r'_1|^m
\big).
\eeq
\item \label{cro:20240920-1249} The derivative
{$$S'(u)t'(u)\begin{cases}
\leq 0,\quad&\hbox{if}~u\in (0,b],\\
\geq 0,\quad&\hbox{if}~u\in (b,\alpha_1).
\end{cases}$$
In particular, $S'(u)=0$ if and only if $t'(u)=0$ on $(b,\alpha_1)$.}
\item \label{lemma:20240920-1517} The function
$$T(u)=\dfrac{1}{m-1}\big[A_1(u)-A_2(u)\big]
=-\dfrac{r_2^2}{r'_1r'_2}s'(u)$$ and $T'(u)=\dfrac{1}{m-1}f(u)\big(r_1|r'_1|^{m-1}-r_2|r'_2|^{m-1}
\big)$. In particular, $T_0:=\lim\limits_{u\downarrow 0}T(u)=0$.
\end{enumerate}
\end{lemma}
\bp
$\ref{prop:20240920-1125}$ If $u_c\in (0,\alpha_1)$ is a critical point of $t$, i.e., $r'_1(u_c)=r'_2(u_c)$, then by Proposition \ref{prop:20240919-2007}, we see that $r_1(u_c)\neq r_2(u_c)$, i.e., $t(u_c)\neq 0$. A direct computation yields that
\begin{align*}
t''(u_c)=&\frac{1}{m-1}\left[\frac{N-1}{r_1}{r'_1}^2+f(u)|r'_1|^m r'_1\right]-\frac{1}{m-1}\left[\frac{N-1}{r_2}{r'_2}^2+f(u)|r'_2|^m r'_2\right]\\
=&\frac{N-1}{m-1}{r'_1}^2\left(\frac{1}{r_1}-\frac{1}{r_2}\right).
\end{align*}
Hence, if $t(u_c)>0$, i.e., $r_1(u_c)>r_2(u_c)>0$, then $t''(u_c)<0$. {This} implies that $u_c$ is a local strict maximum of $t$.
Similarly, if $t(u_c)<0$, i.e., $0<r_1(u_c)<r_2(u_c)$, then $t''(u_c)>0$, which implies that $u_c$ is a local strict minimum of $t$.

$\ref{prop:20240920-1237}$ {Recalling that $\omega'_i(u)=-r_{i}^{N-1}r'_if(u)$, $i=1,2$,
by~\eqref{eq:20240919-1935}, by a direct computation we get that}
\begin{align*}
S'(u)=&{\left(\frac{\omega_1(u)}{\omega_2(u)}\right)'=\dfrac{\omega'_1(u)\omega_2(u)-\omega_1(u)\omega'_2(u)}
{\omega_2(u)^2}}\\
=&\dfrac{-r_{1}^{N-1}r'_1 f(u)\dfrac{r_{2}^{N-1}}{r'_2|r'_2|^{m-2}} + r_{2}^{N-1}r'_2f(u)\dfrac{r_{1}^{N-1}}{r'_1|r'_1|^{m-2}}}
{\left(\dfrac{r_{2}^{N-1}}{r'_2|r'_2|^{m-2}}\right)^2}\\
=&\frac{r_{1}^{N-1}r_{2}^{N-1}f(u)\left(
\dfrac{r'_2}{r'_1|r'_1|^{m-2}}-\dfrac{r'_1}{r'_2|r'_2|^{m-2}}
\right)}{\left(\dfrac{r_{2}^{N-1}}{r'_2|r'_2|^{m-2}}\right)^2}\\
=&\left(\dfrac{r_1}{r_2}\right)^{N-1} \left(\dfrac{r'_2}{r'_1}\right)^{m-1} f(u) \big(|r'_2|^m-|r'_1|^m
\big),
\end{align*}
as stated.

$\ref{cro:20240920-1249}$ Since $t'(u)=r'_1-r'_2=|r'_2|-|r'_1|$, one can see that
\beq\lab{eq:20250108-1819}
\begin{gathered}
t'(u)>(\mbox{resp.} =,\leq )\,0\,\Leftrightarrow\, |r'_2|^m-|r'_1|^m>(\mbox{resp.} =,\leq )\,0\\
\Leftrightarrow r'_2<(\mbox{resp.} =,\geq )\,r'_1<0.
\end{gathered}
\eeq
{Hence, $f(u)\leq 0$ for $u\in (0,b]$, which, combined with $r_1(u)>0$, $r_2(u)>0$, $r'_1(u)<0$, $r'_2(u)<0$, implies that $\left(\dfrac{r_1}{r_2}\right)^{N-1} \left(\dfrac{r'_2}{r'_1}\right)^{m-1} f(u)\leq 0$. Thus, by the conclusion in $\ref{prop:20240920-1237}$, we see that $S'(u)t'(u)\leq 0$ for $u\in (0,b]$.}

{Similarly, by $f(u)>0$ for $u\in (b, \alpha_1)$, we can deduce that $\left(\dfrac{r_1}{r_2}\right)^{N-1} \left(\dfrac{r'_2}{r'_1}\right)^{m-1} f(u)>0$ in $(b,\alpha_1)$.
Then by the conclusion in $\ref{prop:20240920-1237}$ again, we obtain that $S'(u)t'(u)\geq 0$ for $u\in (b,\alpha_1)$. In particular, $S'(u)=0$ if and only if $r'_1(u)=r'_2(u)$, i.e., $t'(u)=0$.}

$\ref{lemma:20240920-1517}$ Noting that $s'(u)=\left(\dfrac{r_1}{r_2}\right)'=\dfrac{r'_1r_2-r_1r'_2}{r_2^2}$, we have
$$T(u)=\dfrac{r_1}{r'_1}-\dfrac{r_2}{r'_2}=-\dfrac{r'_1r_2-r_1r'_2}{r'_1r'_2}=-\frac{r_2^2}{r'_1r'_2} s'(u).$$
Furthermore, by \eqref{eq:20240919-1817}, we have
\beq\lab{eq:20240920-1744}
\begin{aligned}
T'(u)=&\left(\dfrac{r_1}{r'_1}\right)'- \left(\dfrac{r_2}{r'_2}
\right)'
=\frac{{r'_1}^2-r_1r''_1}{{r'_1}^2}-\frac{{r'_2}^2-r_2r''_2}{{r'_2}^2}
=\frac{r_2r''_2}{{r'_2}^2}-\frac{r_1r''_1}{{r'_1}^2}\\
=&\frac{1}{m-1}\frac{r_2}{{r'_2}^2}\left(\frac{N-1}{r_2}{r'_2}^2
+f(u)|r'_2|^m r'_2\right)\\
&\qquad-\frac{1}{m-1}\frac{r_1}{{r'_1}^2}\left(\frac{N-1}{r_1}{r'_1}^2
+f(u)|r'_1|^m r'_1\right)\\
=&\frac{1}{m-1}f(u)\big(r_2|r'_2|^{m-2}r'_2-r_1|r'_1|^{m-2}r'_1
\big)\\
=&{\frac{1}{m-1}}f(u)\big(r_1|r'_1|^{m-1}-r_2|r'_2|^{m-1}\big).
\end{aligned}
\eeq
Lemma \ref{lemma:20240919-1907} yields that $A_1(u)\rightarrow 0$, $A_2(u)\rightarrow 0$ as $u\downarrow 0$. Hence, $$T_0:=\lim\limits_{u\downarrow 0}T(u)=\lim\limits_{u\downarrow 0}\frac{1}{m-1}\big[A_1(u)-A_2(u)\big]=0,$$
{as required.}
\ep

\begin{lemma}\label{lemma:20260123-2113}
Let $2\leq N\leq m$, then
\begin{enumerate}[label=(\roman*)]
\item \label{prop:20240920-1549}
$t(u)t'(u)<0$ on $(0,b]$.
\item \label{prop:20240923-1336}
$t$ has at least one zero in $(b,\alpha_1)$.
\end{enumerate}
\end{lemma}

\bp
\ref{prop:20240920-1549}
First, we {\bf claim} that $t=t(u)$ has at most one zero in $(0,b]$. If not, since $t(u)\not\equiv 0$, we can find two adjacent zeros $\tau_1, \tau_2\in (0,b], \tau_1<\tau_2$ and $t(u)>0$ for $u\in (\tau_1, \tau_2)$ (see
Remark~\ref{remark:20241010-1521}).
{Hence,} there exists some $\tau\in (\tau_1,\tau_2)$ such that $t'(\tau)=0$. {Therefore,} by
Lemma \ref{lemma:20260123-2021}-\ref{prop:20240920-1125}, there is no other critical point of $t$ in $(\tau, \tau_2)$. {Thus, $t(\tau)>0$ and $t(\tau_2)=0$
imply} that $t'(u)<0$ for $u\in (\tau, \tau_2)$
{and so} $t'(\tau_2)\leq 0$. Furthermore,
Proposition~\ref{prop:20240919-2007} and $t(\tau_2)=0$
{yield} that $t'(\tau_2)<0$.

For $u\in (\tau, \tau_2]\subset (0,b]$, it holds
that $t'(u)< 0$. Then by
Lemma \ref{lemma:20260123-2021}-\ref{cro:20240920-1249}, we
{derive that} $S'(u)\geq 0$ for $u\in (\tau, \tau_2]$. {This} implies that $S(\tau_2)\geq S(\tau)$.

{However, at $u=\tau$, by $t'(\tau)=0$ and $t(\tau)>0$, we have that $r'_1(\tau)=r'_2(\tau)$, $r_1(\tau)>r_2(\tau)>0$ and $s(\tau)^{N-1}>1$.}
Thus,
$$S(\tau)=\frac{{r_{1}^{N-1}}/{r'_1|r'_1|^{m-2}}}
{{r_{2}^{N-1}}/{r'_2|r'_2|^{m-2}}}=\left(\dfrac{r_1(\tau)}{r_2(\tau)}
\right)^{N-1}=s(\tau)^{N-1}>1.$$
{And at $u=\tau_2$, it holds $t(\tau_2)=0$, which implies that $r_1(\tau_2)=r_2(\tau_2)$.} Since $t'(\tau_2)<0$, we have $r'_1(\tau_2)<r'_2(\tau_2)<0$ and so $\dfrac{|r'_2(\tau_2)|}{|r'_1(\tau_2)|}<1$. Hence,
$$S(\tau_2)=\frac{{r_{1}^{N-1}}/{r'_1|r'_1|^{m-2}}}{{r_{2}^{N-1}}/{r'_2|r'_2|^{m-2}}}=\frac{r'_2(\tau_2)|r'_2(\tau_2)|^{m-2}}{r'_1(\tau_2)|r'_1(\tau_2)|^{m-2}}=\left(\frac{|r'_2(\tau_2)|}{|r'_1(\tau_2)|}\right)^{m-1}<1.$$
This contradicts  $S(\tau_2)\geq S(\tau)$ and proves the claim.

Now, without loss of generality, we assume that there exists some $\delta\in (0,b]$ such that
\beq\lab{eq:20240920-1619}
t(u)=r_1(u)-r_2(u)>0~\mbox{ for }~0<u\leq \delta.
\eeq
Let $B(u)$ be defined by \eqref{eq:20240919-2033}.
{A direct computation and \eqref{eq:20240919-1817} yield}
\begin{align*}
B'(u)=&-m\dfrac{r''_1}{|r'_1|^m r'_1}+m\dfrac{r''_2}{|r'_2|^m r'_2}\\
=&\frac{m}{m-1}\frac{1}{|r'_2|^m r'_2}\left(\frac{N-1}{r_2}{r'_2}^2+f(u)|r'_2|^m r'_2\right)
-\frac{m}{m-1}\frac{1}{|r'_1|^m r'_1}\left(\frac{N-1}{r_1}{r'_1}^2+f(u)|r'_1|^m r'_1
\right)\\
=&\frac{m}{m-1}(N-1)\left(\frac{1}{|r'_2|^{m-2}r'_2 r_2}-\frac{1}{|r'_1|^{m-2}r'_1 r_1}\right)\\
=&\frac{m}{m-1}(N-1)\left(\frac{1}{|r'_1|^{m-1} r_1}-\frac{1}{|r'_2|^{m-1} r_2}\right),
\end{align*}
which implies that
\beq\lab{eq:20250108-1923}
\frac{r_2}{|r'_2|}B'(u)=\frac{m}{m-1}(N-1)
\left(\frac{r_2}{r_1}\frac{1}{|r'_1|^{m-1} |r'_2|}-\frac{1}{|r'_2|^{m}}\right).
\eeq
Noting that \eqref{eq:20240920-1619} implies  $\dfrac{r_2}{r_1}<1$ on $(0,\delta]$, thus
\beq\lab{eq:20250108-1924}
\frac{r_2}{|r'_2|}B'(u)\leq \frac{m}{m-1}(N-1)\left(\frac{1}{|r'_1|^{m-1} |r'_2|}-\frac{1}{|r'_2|^{m}}\right).
\eeq
 By Young's inequality,  ${\dfrac{1}{|r'_1|^{m-1} |r'_2|}\leq} \dfrac{m-1}{m}\dfrac{1}{|r'_1|^m}
 +\dfrac{1}{m}\dfrac{1}{|r'_2|^m} $,
 {so} that, for $u\in (0,\delta]$,
\beq\lab{eq:20240920-1648}\begin{aligned}
\frac{m}{m-1}(N-1)\left(\frac{1}{|r'_1|^{m-1} |r'_2|}-\frac{1}{|r'_2|^{m}}\right)
&\leq(N-1) \left(\frac{1}{|r'_1|^m}-\frac{1}{|r'_2|^m}\right)\\
&=(N-1)B(u).\end{aligned}
\eeq
Hence,
\beq\lab{eq:20240920-1649}
B'(u)<(N-1)\frac{|r'_2|}{r_2} B(u), \quad u\in (0,\delta].
\eeq
Lemma \ref{lemma:20260123-2021}-\ref{prop:20240920-1125} gives that $t(u)$ has no local minimum on $(0,\delta]$. That is, $t(u)$ can have at most one critical point necessarily a strict maximum in $(0,\delta)$. Thus, {either}
\beq\lab{eq:20240920-1630}
t'(u)=r'_1(u)-r'_2(u)<0~\mbox{ for }~0<u\leq \delta
\eeq
or there exists some $\delta_1\in (0,\delta]$ such that
\beq\lab{eq:20240920-1632}
t'(u)=r'_1(u)-r'_2(u)>0~\mbox{ for }~0<u<\delta_1.
\eeq
We {\bf claim} that \eqref{eq:20240920-1632} is impossible. If not, $0>r'_1(u)>r'_2(u)$ and {so} $B(u)>0$
for $0<u<\delta_1$.
{This combined with
\eqref{eq:20240920-1649} implies that}
\beq\lab{eq:20240920-1656}
\frac{B'(u)}{B(u)}<(N-1)\frac{|r'_2|}{r_2},\quad u\in (0, \delta_1).
\eeq
For any {$u$, $\bar{u}$, with} $0<u<\bar{u}<\delta_1$,   integrating \eqref{eq:20240920-1656} from $u$ to $\bar{u}$, we {have}
\beq\lab{eq:20240920-1702}
\frac{B(\bar{u})}{B(u)}<\left(\frac{r_2(u)}{r_2(\bar{u})}\right)^{N-1}~\hbox{for any}~0<u<\bar{u}<\delta_1.
\eeq
{Thus, $B(u)>0$ and \eqref{eq:20240920-1702} yield} that
\beq\lab{eq:20240920-1703}
0<r_{2}^{N-1}(\bar{u})B(\bar{u})<r_{2}^{N-1}(u)B(u), \forall~ 0<u<\bar{u}<\delta_1.
\eeq
{Proposition \ref{prop:20240919-1819}-(i) gives in particular that} $\dfrac{r_{i}^{N-1}(u)}{|r'_i(u)|^{m-1}}\rightarrow 0$ and $\dfrac{1}{|r'_i(u)|}\rightarrow 0$ as $u\downarrow 0$ for $i=1,2$. Then, assumption \eqref{eq:20240920-1619} {implies}
\beq\lab{eq:20240920-1718}
\frac{r_{2}^{N-1}(u)}{|r'_1(u)|^m}
<\frac{r_{1}^{N-1}(u)}{|r'_1(u)|^m}\rightarrow 0~\hbox{as}~u\downarrow 0.
\eeq
Hence,
\beq\lab{eq:20240920-1719}
\lim_{u\downarrow 0} r_{2}^{N-1}(u)B(u)
=\lim_{u\downarrow 0} \left[\frac{r_{2}^{N-1}(u)}{|r'_1(u)|^m}
-\frac{r_{2}^{N-1}(u)}{|r'_2(u)|^m}\right]=0.
\eeq
{Thus}, letting $u\downarrow 0$ in \eqref{eq:20240920-1703}, we obtain that $$0<r_{2}^{N-1}(\bar{u})B(\bar{u})\leq \lim\limits_{u\downarrow 0} r_{2}^{N-1}(u)B(u)=0,$$
{which is an obvious} contradiction. The claim is
{so} proved and \eqref{eq:20240920-1630} is true.

{Now,} \eqref{eq:20240920-1619}   and
\eqref{eq:20240920-1630}
{imply that $t>0$ and} $t'<0$ for $u\in (0,\delta]$.  If $tt'<0$ were not true on the whole interval $(0,b]$, we define
\beq
\delta_2:=\sup\{{v>0}\,:\, t(u)t'(u)<0~\mbox{ for all }~0<u<v\}.
\eeq
We note that $\delta<\delta_2\leq b$ and $t(\delta_2)t'(\delta_2)=0$. Then it is easy to see that one of the following {cases} holds:
\begin{itemize}
\item[(i)] $t(\delta_2)>0$, $t'(\delta_2)=0$, $t''(\delta_2)\geq 0$;
\item[(ii)] $t(\delta_2)=0$, $t'(\delta_2)=0$;
\item[(iii)] $t(\delta_2)=0$, $t'(\delta_2)<0$.
\end{itemize}
Case (i) cannot occur, since it contradicts Lemma \ref{lemma:20260123-2021}-\ref{prop:20240920-1125}. Also (ii) cannot happen, since it goes against Proposition~\ref{prop:20240919-2007}. Thus, (iii) holds.

{The definition of $\delta_2$ implies that}
\beq\lab{eq:20240920-1742}
\begin{cases}
t(u)>0,\mbox{ i.e., }~r_1>r_2,~&\mbox{for }~0<u<\delta_2\\
t'(u)<0, \mbox{ i.e., }~|r'_1|>|r'_2|, ~&\mbox{for }~0<u\leq \delta_2.
\end{cases}
\eeq
In particular, $r'_1(\delta_2)<r'_2(\delta_2)<0$ and {so}
\beq\lab{eq:20240920-1735}
T(\delta_2)=\frac{r_1(\delta_2)}{r'_1(\delta_2)}
-\frac{r_2(\delta_2)}{r'_2(\delta_2)}
=r_1(\delta_2)\left(\frac{1}{|r'_2(\delta_2)|}
-\frac{1}{|r'_1(\delta_2)|}\right)>0.
\eeq
{For} $0<u<\delta_2\leq b$, it holds that $f(u)\leq 0$, so \eqref{eq:20240920-1744} and \eqref{eq:20240920-1742} give
\beq\lab{eq:20240920-1750}
T'(u)=f(u)\big(r_1|r'_1|^{m-1}-r_2|r'_2|^{m-1}\big)\leq 0.
\eeq
Hence,
$$T_0:=\lim_{u\downarrow 0}T(u)\geq T(\delta_2)>0.$$
This contradicts the fact that $T_0=0$ by
Lemma \ref{lemma:20260123-2021}-\ref{lemma:20240920-1517} and completes the proof.

\ref{prop:20240923-1336} Without loss of generality, we assume that $b<\alpha_1<\alpha_2$ {as before, then} we note that
\beq\lab{eq:20240921-0834}
t(\alpha_1)<0,\quad t'(\alpha_1)=-\infty.
\eeq
{By \ref{prop:20240920-1549}, $t$ has no zero in $(0,b]$. Being  $t(\alpha_1)<0$, arguing by contradiction, we suppose that}
\beq\lab{eq:20240923-1046}
t(u)=r_1(u)-r_2(u)<0~\hbox{over the entire interval $(0,\alpha_1)$}.
\eeq
Lemma \ref{lemma:20260123-2113}-\ref{prop:20240920-1549} gives that $t'(u)>0$ over $(0,b]$. Combining {this} with $t'(\alpha_1)=-\infty$, we conclude the existence of
{a} critical point of $t(u)$. {Thus,} there exists  some $\bar{u}\in {(b,\alpha_1)}$ such that
\beq\lab{eq:20240921-1549}
t(\bar{u})<0,\;\;t'(\bar{u})=0~\mbox{ and }~t'(u)<0~, \mbox{ for all }u\in (\bar{u},\alpha_1)
\eeq
This implies that $t''(\bar{u})\leq 0$ and contradicts Lemma \ref{lemma:20260123-2021}-\ref{prop:20240920-1125}.
\ep

\br\lab{remark:20241010-1521}
{\rm Let us note that the proof of
Lemma \ref{lemma:20260123-2113}-\ref{prop:20240920-1549} does not depend on the assumption $\alpha_1<\alpha_2$. Thus, without loss of generality, we may assume that $t(u)>0$ for $u\in (\tau_1, \tau_2)$. Hence,  \eqref{eq:20240920-1619} {holds.
Otherwise}, only limited to the proof process of
Lemma \ref{lemma:20260123-2113}-\ref{prop:20240920-1549}, we replace $t(u)$ by $\tilde{t}(u):=r_2(u)-r_1(u)$ and then the arguments above lead to $\tilde{t}(u)\tilde{t}'(u)<0$ on $(0,b]$. Since $\tilde{t}(u)=-t(u)$, we also obtain that $t(u)t'(u)<0$ on $(0,b]$.}
\er

\subsection{Proof of Theorem $\ref{th:20250107-1127}$ for $2\leq N\leq m$}

{Combining Lemma \ref{lemma:20260123-2113}-$\ref{prop:20240923-1336}$
with $t(\alpha_1)<0$,} by~\eqref{eq:20240921-0834},
we define
\beq\lab{eq:20240923-1120}
u_I:=\inf \{u\,:\, t(v)<0~\mbox{ for all }~u<v<\alpha_1\}.
\eeq
{Hence,}
\beq\lab{eq:20240923-1133}
t(u_I)=0~\hbox{and}~t(u)<0~\hbox{for}~u_I<u<\alpha_1.
\eeq
That is, $u_I$ denotes the largest zero of $t$ in $(0,\alpha_1)$.
Recalling \eqref{eq:20240921-0834} again that $t'(\alpha_1)=-\infty$, we {also define}
\beq\lab{eq:20240923-1132}
\begin{aligned}
u_c:=\sup\{u\geq b\,:\, \mbox{there exists some } \varepsilon_u>0&\mbox{ such that }t(u)\geq t(v)\mbox{ for all}\\
&v\in [b,\alpha_1)\cap (u-\varepsilon_u,u+\varepsilon_u)\}.
\end{aligned}
\eeq
That is, if there exists some local maximum point
{for $t$} in $[b,\alpha_1)$, then $u_c$ denotes the largest one. Otherwise, $u_c=b$.

{
\textbf{Step 1.} We first prove the inequality
\beq\lab{eq:20240923-1402}
P_1(u_c) - S(u_c)P_2(u_c)
\begin{cases}
\geq 0, & \text{if } u_c = b, \\[2pt]
> 0,   & \text{if } u_c > b,
\end{cases}
\eeq
where $P_i$ ($i=1,2$) are defined as in \eqref{eq:20240919-1928} with $u$ replaced by $u_i$, respectively. This conclusion will be supported by the following lemmas.}

\bl\lab{lemma:20240923-1227}
$b\leq u_c<u_I$.
\el

\bp
It is trivial that $b\leq u_c$. {We only need to show} that $u_c<u_I$. {Arguing by contradiction, we} suppose that $u_c\geq u_I$.

{\bf Case 1:} $u_c=u_I$. Then $t(u_c)=0$, which implies
that $u_c\neq b$, {being} $t(b)t'(b)<0$. {Hence,} $u_c>b$
and {so} $t'(u_c)=t(u_c)=0$, {which is impossible by}
Proposition~\ref{prop:20240919-2007}.

{\bf Case 2:} $u_c>u_I$. By the definition of $u_I$, it holds that $t(u_c)<0$. {Therefore,} $t(u_c)$ is a negative local maximum. This is again impossible now by
Lemma \ref{lemma:20260123-2021}-\ref{prop:20240920-1125}.
\ep

\begin{lemma}\lab{prop:20240923-1148}
It holds that
\beq\lab{eq:20240923-1246}
t(u_c)>0 ~\mbox{ and }~t'(u)<0 \mbox{ for all }u_c<u<\alpha_1.
\eeq
\end{lemma}

\bp
We first prove that the property holds for $u>u_c$ close to $u_c$.\\
{\bf Case $u_c=b$:} In such a case, by the definition of $u_c$, one can see that  $t(u)<t(b)$ for all $u\in (b,\alpha_1)$. This implies that $t'(b)\leq 0$.
However,  $t(u)t'(u)<0$ on $(0,b]$ by
Lemma \ref{lemma:20260123-2113}-\ref{prop:20240920-1549}, so that $t'(b)\neq 0$. Thus, $t'(b)<0$ {and so} $t(b)>0$. In particular, $t'(u)<0$ for $u>u_c$ close to  $u_c$.\\
{\bf Case $u_c>b$:} In such a case, it holds that $t'(u_c)=0$ and $t''(u_c)\leq 0$. In particular, from Lemma \ref{lemma:20260123-2021}-\ref{prop:20240920-1125} we deduce further that $t''(u_c)< 0$ and $t(u_c)>0$. Hence, {again} $t'(u)<0$ for $u>u_c$ close to  $u_c$.
\vskip 0.02in

If $t'(u)<0$ {were} not true for all $u\in (u_c,\alpha_1)$, then there exists a first $u^*_1>u_c$ such that $t'(u^*_1)=0$ with $t''(u^*_1)\geq 0$. Furthermore,  $t''(u^*_1)>0$ and $t(u^*_1)<0$ by Lemma \ref{lemma:20260123-2021}-\ref{prop:20240920-1125}. By \eqref{eq:20240921-0834} there exists some local maximum point $u^*_2\in (u_1^*,\alpha_1)$. Then, the
definition~\eqref{eq:20240923-1132} of $u_c$ yields
$$u_c\geq u^*_2>u_1^*,$$
which {is impossible, being} $u_1^*>u_c$.
\ep

\begin{lemma}\lab{prop:20240923-1323}
Any critical point of $s(u)$ in $(u_c, \alpha_1)$ must be a strict local maximum point. That is, if $s'(u)=0$ for some $u\in (u_c, \alpha_1)$, then $s''(u)<0$.
\end{lemma}
\bp
Since $s(u)=\dfrac{r_1(u)}{r_2(u)}$, we {obtain} $s'(u)=\dfrac{r'_1r_2-r_1r'_2}{r_2^2}$ and $$
s''(u)=\dfrac{\big(r''_1r_2-r_1r''_2\big)r_2^2-2r_2r'_2
\big(r'_1r_2-r_1r'_2\big)}{r_2^4}.
$$
{Hence}, if $s'(u)=0$, i.e., $r'_1r_2-r_1r'_2=0$, we deduce that
\beq\lab{eq:20240921-0942}
\begin{aligned}
s''(u)=&\frac{r''_1r_2-r_1r''_2}{r_2^2}\\
=&\frac{1}{m-1}\frac{1}{r_2^2}\left\{r_2\left[\frac{N-1}{r_1}{r'_1}^2
+f(u)|r'_1|^m r'_1\right]\right.\\
&\qquad\qquad\qquad\left.-r_1\left[\frac{N-1}{r_2}{r'_2}^2
+f(u)|r'_2|^m r'_2\right]\right\}\\
=&\frac{1}{m-1}\frac{r'_1}{r_2}f(u)\big(|r'_1|^m-|r'_2|^m\big).
\end{aligned}
\eeq
{Now, $t'(u)<0$ for $u_c<u<\alpha_1$ by
Proposition~\ref{prop:20240923-1148},} that is, $r'_1(u)<r'_2(u)<0$ for $u_c<u<\alpha_1$. Hence, $|r'_1|^m-|r'_2|^m>0$ for $u_c<u<\alpha_1$. Recalling that $u_c\geq b$, under the assumption \ref{hcon_1}, {we see} that $f(u)>0$ for $u_c<u<\alpha_1$.  Combining {this with the facts
that $r'_1<0$ and $r_2>0$ in} $(0,\alpha_1)$, we conclude that $s''(u)<0$ for any $u\in(u_c,\alpha_1)$, with $s'(u)=0$,
{as required.}
\ep


\textbf{Proof of  \eqref{eq:20240923-1402}}: We divide the proof into two cases:

{\bf Case I: $u_c=b$.}  In such a case, by
Lemma \ref{prop:20240923-1148}
and Lemma \ref{lemma:20260123-2113}-\ref{prop:20240920-1549}, we see that $t(b)>0$, $t'(b)<0$.  Furthermore, $t(u)>0$ and $t'(u)<0$ for all $0<u\leq b$, i.e., $r_1(u)>r_2(u)>0$ and $r'_1(u)<r'_2(u)<0$ for $u\in (0,b]$. Hence, $|r'_1(u)|>|r'_2(u)|$ in $(0,b]$.
{Thus}, $r_1|r'_1|^{m-1}-r_2|r'_2|^{m-1}>0$  {in} $(0,b]$. Lemma \ref{lemma:20260123-2021}-\ref{lemma:20240920-1517} says that $T(0)=T_0=0$ and  that $T'(u)=\dfrac{1}{m-1}f(u)\big(r_1|r'_1|^{m-1}-r_2|r'_2|^{m-1}
\big)$. {Since} $f(u)\leq 0$ for $u\in (0,b]$, we {deduce} that
\beq\lab{eq:20240921-0847}
T'(u)\leq 0~\mbox{ for }~0<u\leq b.
\eeq
In particular,
\beq\lab{eq:20240921-0848}
T(b)\leq T_0=0.
\eeq
Then, using the identity \eqref{eq:20240919-1930} for both the solutions $u_1$ and $u_2$, together with the
definition~\eqref{eq:20240920-0824} of $S(u)$ and
Lemma \ref{lemma:20260123-2021}-\ref{lemma:20240920-1517}, we get
\beq\lab{eq:20240921-0853}\begin{aligned}
P_1(b)-S(b)P_2(b)&=\frac{1}{m}[\omega_1(b)A_1(b)-S(b)
\omega_2(b)A_2(b)]\\
&=\frac{m-1}{m}\omega_1(b)T(b)\geq 0,\end{aligned}
\eeq
where we have used the fact {that} $\omega_1(b)=\dfrac{r_1(b)}{r'_1(b)}<0$.

{\bf Case II: $u_c>b$.} In such a case, it holds that $t'(u_c)=0$. Moreover,
Lemma \ref{prop:20240923-1148} and Lemma \ref{lemma:20260123-2021}-\ref{prop:20240920-1125}
give that $t(u_c)>0$ and $t''(u_c)<0$. {Hence}, $r_1(u_c)>r_2(u_c)>0$ and $r'_1(u_c)=r'_2(u_c)$
{This} implies that $s'(u_c)=\dfrac{r'_1(u_c)r_2(u_c)-r'_2(u_c)r_1(u_c)}{r_2^2(u_c)}>0$.
{Since}
$$S(u_c)=\frac{\omega_1(u_c)}{\omega_2(u_c)}
=\left(\frac{r_1(u_c)}{r_2(u_c)}\right)^{N-1}=s^{N-1}(u_c)>1,$$
we have $S(u_c)=s^{N-1}(u_c)<s(u_c)^N$, i.e., $r_1(u_c)^N-S(u_c)r_2(u_c)^N>0$.
{Thus,  $f(u_c)>0$ and \eqref{eq:20240919-1928}
imply that} for any $a>0$
\beq\lab{eq:20240921-1433}
\begin{aligned}
P_1(u_c)&-S(u_c)P_2(u_c)\\
&=au_c\omega_1(u_c)+r_1(u_c)^N
\left[\frac{m-1}{m}\frac{1}{|r'_1(u_c)|^m}
+\frac{a}{N}u_cf(u_c)\right]\\
&\quad-S(u_c)\left\{au_c\omega_2(u_c)
+r_2(u_c)^N\left[\frac{m-1}{m}
\frac{1}{|r'_2(u_c)|^m}+\frac{a}{N}u_cf(u_c)\right]\right\}\\
&=\left(\frac{m-1}{m}\frac{1}{|r'_2(u_c)|^m}+\frac{a}{N}u_cf(u_c)
\right)\cdot\big(r_1(u_c)^N-S(u_c)r_2(u_c)^N\big)>0.
\end{aligned}
\eeq
{This completes the proof of  \eqref{eq:20240923-1402}.}\hfill$\Box$

\vskip 0.2in

{
\textbf{Step 2.} We now establish the inequality $P_1(u_c) - S(u_c)P_2(u_c) < 0$ by an alternative method, thereby contradicting the nonnegativity result obtained in \textbf{Step 1}.
Using the equivalent integral representation of $P$ from Proposition~\ref{prop:20240919-2000}, we decompose the integral over carefully chosen partition points into a sum of contributions from distinct intervals.
To determine the sign of the integrand in each subinterval, we first develop several auxiliary lemmas.}

\begin{lemma}\lab{prop:20240923-1526} The derivative
$$s'(u_c)\begin{cases} \geq 0,\quad&\hbox{if}~u_c=b,\\
>0,\quad&\hbox{if}~u_c>b.
\end{cases}$$
\end{lemma}

\bp
If $u_c=b$, {then $T(b)\leq 0$ by} \eqref{eq:20240921-0848}.  {Moreover, $T(b)=-\dfrac{r_2(b)^2}{r'_1(b)r'_2(b)}s'(b)$ by Lemma~\ref{lemma:20240920-1517}, so} that $s'(u_c)=s'(b)\geq 0$.

If $u_c>b$, {then} $t(u_c)>0$, $t'(u_c)=0$ and $t''(u_c)<0$. {Hence}, $r_1(u_c)>r_2(u_c)>0$ and $r'_1(u_c)=r'_2(u_c)<0$. {This} implies that
$s'(u_c)=\dfrac{r'_1(u_c)r_2(u_c)
-r'_2(u_c)r_1(u_c)}{r_2^2(u_c)}>0$.
\ep

\begin{lemma}\lab{prop:20240923-1650}
There exists a {number} $\bar{u}_I\in [u_c, u_I)\subset [u_c, \alpha_1)$ such that $s'(\bar{u}_I)=0$ and
\beq\lab{eq:20240921-1012}
{s'(u)\begin{cases}
>0,\quad &u\in (u_c, \bar{u}_I);\\
<0,\quad& u\in (\bar{u}_I,\alpha_1).
\end{cases}}
\eeq
\end{lemma}

\bp
{Let us first} note that $s(u_c)=\dfrac{r_1(u_c)}{r_2(u_c)}>1$, since $t(u_c)>0$ by Proposition~\ref{prop:20240923-1148}. {Moreover,} $s(u_I)=1$, {since} $t(u_I)=0$. {Let $\bar{u}_I\in [u_c, u_I]$ be the number} such that
\beq\lab{eq:20240923-1502}
s(\bar{u}_I)=\max_{u\in [u_c, u_I]}s(u).
\eeq
We {assert}  that $\bar{u}_I$ is exactly the unique element we are seeking for.
{Proposition~\ref{prop:20240923-1323} shows that $s(u)$ cannot be} constant in any interval contained in $(u_c, \alpha_1)$.
Hence, we divide the proof into two different cases.

{\bf Case I: $s(u)<s(u_c)$ for $u>u_c$ close to $u_c$.} In such a case, we have that $s'(u_c)\leq 0$. {On the other hand},  Proposition~\ref{prop:20240923-1526} {shows that this occurs only when $u_c=b$ and} $s'(b)=0$. {Consequently,} $s'(u)<0$ for $u>b$ close to $b$. We shall prove that $s'(u)<0$ for all $u\in (b, \alpha_1)$ and then {that} $\bar{u}_I=u_c=b$. If not, let $u_0>b$ be the first zero of {$s'$}. Then, it holds that $s''({u_0})\geq 0$, which is impossible by Proposition~\ref{prop:20240923-1323}.

{\bf Case II: $s(u)>s(u_c)$ for $u>u_c$ close to $u_c$.} In such a case, recalling again $s(u_c)>1=s(u_I)$, we see that $\max_{u\in [u_c, u_I]}s(u)$ is attained by some $\bar{u}_I$. By Proposition~\ref{prop:20240923-1323} again, {$s$} has a unique critical point, which is also a local maximum, in $(u_c,\alpha_1)$. In particular, \eqref{eq:20240921-1012} holds.
\ep

\br\lab{remark:20240923-1555}
{\rm By the proof of Proposition \ref{eq:20240921-1012}, we see that either
\begin{gather}\lab{eq:20240923-1558}
b\leq u_c<\bar{u}_I<u_I<\alpha_1,\quad\mbox{or}\\
\lab{eq:20240923-1559}
b=u_c=\bar{u}_I<u_I<\alpha_1.
\end{gather}}
\er

\begin{lemma}\lab{prop:20240923-1600}
There exists some $u_e\in (\bar{u}_I, \alpha_1)$ such that $s(u_e)^N=S(u_c)$ and
\beq\lab{eq:20240923-1601}
{s(u)^N-S(u_c)\begin{cases}
> 0~\quad&\hbox{if}~u\in (u_c, u_e),\\
<0~&\hbox{if}~u\in (u_e, \alpha_1).
\end{cases}}
\eeq
\end{lemma}

\bp
If \eqref{eq:20240923-1559} {occurs}, then  $b=u_c=\bar{u}_I$ and $s'(b)=0$. {Hence},  $\dfrac{r'_1(b)}{r'_2(b)}=\dfrac{r_1(b)}{r_2(b)}$ and {so}
\beq \lab{eq:20240923-1613}
S(u_c)=S(b)=\frac{\omega_1(b)}{\omega_2(b)}
=\frac{r_{1}^{N-1}(b)|r'_2(b)|^{m-1}}{r_{2}^{N-1}(b)|r'_1(b)|^{m-1}}
=\left(\frac{r_1(b)}{r_2(b)}\right)^{N-m}=s(b)^{N-m}.
\eeq
{Since $s$ is strictly} decreasing in $(b,\alpha_1)$, we see that $s(b)>s(u_I)=1$ and {so}
\beq\lab{eq:20250109-1308}
S(u_c)=s(b)^{N-m}<s(b)^N=s^N(u_c).
\eeq
If \eqref{eq:20240923-1558} holds, then $b\leq u_c<\bar{u}_I<u_I<\alpha_1$. If $u_c=b$,
{Proposition~\ref{prop:20240923-1148} gives} that $t(u_c)>0$, {so} that $s(u_c)=s(b)>1$. In particular, $t'(b)<0$ and {so} $\dfrac{|r'_2(b)|}{|r'_1(b)|}<1$. {Hence,}
\beq\lab{eq:20240923-1649}\begin{aligned}
S(u_c)&=S(b)=\frac{r_{1}^{N-1}(b)|r'_2(b)|^{m-1}}{r_{2}^{N-1}(b)
|r'_1(b)|^{m-1}}<\left(\frac{r_1(b)}{r_2(b)}\right)^{N-1}
=s(b)^{N-1}\\
&<s(b)^N=s(u_c)^N.\end{aligned}
\eeq
If $u_c>b$, then $t'(u_c)=0$, $t(u_c)>0$ and  $t''(u_c)<0$.
{Hence}, $s(u_c)>1$ and {so} it also holds that
\beq\lab{eq:20240923-1655}
S(u_c)=\frac{r_{1}^{N-1}(u_c)|r'_2(u_c)|^{m-1}}{r_{2}^{N-1}
(u_c)|r'_1(u_c)|^{m-1}}
=\left(\frac{r_1(u_c)}{r_2(u_c)}\right)^{N-1}
=s^{N-1}(u_c)<s^N(u_c).
\eeq
 Put $h(u):=s(u)^N-S(u_c)$, $u\in [b, \alpha_1]$. {Then,} by \eqref{eq:20250109-1308}, \eqref{eq:20240923-1649} and \eqref{eq:20240923-1655}, we {derive} that $h(u_c)=s(u_c)^N-S(u_c)> 0$. On the other hand, $h(\alpha_1)=0-S(u_c)<0$. {Hence}, by the intermediate value theorem and monotonicity, there exists a unique $u_e\in [b,\alpha_1)$ such that $S(u_c)= s(u_e)^N$ and \eqref{eq:20240923-1601} holds.
\ep

{Now, we are ready to complete the proof of
Theorem~\ref{th:20250107-1127} for $2\leq N\leq m$ as follows.}
Put
\beq\lab{eq:20240921-1255}
a=\frac{N}{1+g(u_e)},
\eeq
{which leads to the relation $g(u_e)=\dfrac{N-a}{a}$.
By assumption \ref{hcon_3}, we have $g(u_e)>-1$, which ensures $a>0$. Note that \ref{hcon_3} is only used at the point $u_e$ in the argument.}
 We also note that $a+1-\dfrac{N}{m}\geq a>0$, {since $2\le N\leq m$.
Thus}, by
Proposition~\ref{prop:20240919-2000}, we deduce that
\begin{align*}
P_1(&u_c)-S(u_c)P_2(u_c)\\
&=\left(a+1-\frac{N}{m}\right)
\int_{\alpha_1}^{u_c}\omega_1(u)\mathrm{d}u +\frac{a}{N}\int_{\alpha_1}^{u_c}r_1(u)^N f(u)\left[g(u)-\frac{N-a}{a}\right]\mathrm{d}u\\
&\;\;-S(u_c)\left\{\left(a+1-\frac{N}{m}\right)
\int_{\alpha_2}^{u_c}\omega_2(u)\mathrm{d}u +\frac{a}{N}\int_{\alpha_2}^{u_c}r_2(u)^N f(u)\left[g(u)-\frac{N-a}{a}\right]\mathrm{d}u\right\}\\
&=\left(a+1-\frac{N}{m}\right)\int_{\alpha_1}^{u_c}\omega_2(u)[S(u)-S(u_c)]\mathrm{d}u
+\left(a+1-\frac{N}{m}\right)S(u_c)
\int_{\alpha_1}^{\alpha_2}\omega_2(u)\mathrm{d}u\\
&\;\;+\frac{a}{N}\int_{\alpha_1}^{u_c}f(u)
\big[g(u)-g(u_e)\big]r_2(u)^N\big[s(u)^N-S(u_c)\big]\mathrm{d}u\\
&\;\;+\frac{a}{N}{S(u_c)}\int_{\alpha_1}^{\alpha_2}
f(u)\big[g(u)-g(u_e)\big]r_2(u)^N\mathrm{d}u\\
&=:I_1+I_2+I_3+I_4.
\end{align*}
Lemma \ref{prop:20240923-1148}
gives that $t'(u)<0$ for $u\in (u_c,\alpha_1)$. {Hence,} combining {this}, with Lemma~\ref{lemma:20260123-2021}-\ref{cro:20240920-1249},
we obtain that {$S'<0$ in} $(u_c,\alpha_1)$ and {so}
${S}<S(u_c)$ in $(u_c,\alpha_1)$. Together with $\omega_2(u)<0$ and $u_c<\alpha_1$, we see that $I_1<0$. {Now,} $S(u_c)>0$, $\alpha_1<\alpha_2$ and $\omega_2<0$ in $(\alpha_1,\alpha_2)$ imply that $I_2<0$.

{
For $u \in (u_c, u_e)$, Lemma~\ref{prop:20240923-1600} implies
 $s(u)^N-S(u_c)>0$ and the monotonicity in \ref{hcon_2} yields $g(u) \geq g(u_e)$. While for $u\in (u_e,\alpha_1)$, Lemma~\ref{prop:20240923-1600} implies $s(u)^N-S(u_c)<0$ and the monotonicity in \ref{hcon_2} yields $g(u) \leq g(u_e)$.
Hence, we always have that
$$[g(u) - g(u_e)\big] \big[s(u)^N - S(u_c)\big]\geq 0,\quad u\in (u_c, \alpha_1).$$
By \ref{hcon_1}, we have $f(u) > 0$ on $(u_c, \alpha_1)$.
Therefore, the integrand satisfies
$$
f(u)\big[g(u) - g(u_e)\big] r_2(u)^N \big[s(u)^N - S(u_c)\big] \geq 0, \quad u \in (u_c, \alpha_1).
$$
Consequently,
$$
\int_{\alpha_1}^{u_c} f(u)\big[g(u) - g(u_e)\big] r_2(u)^N \big[s(u)^N - S(u_c)\big] \mathrm{d}u \leq 0.
$$
Since $\frac{a}{N}>0$, it follows that
$$
I_3:=\frac{a}{N}\int_{\alpha_1}^{u_c} f(u)\big[g(u) - g(u_e)\big] r_2(u)^N \big[s(u)^N - S(u_c)\big] \mathrm{d}u \leq 0.
$$
}

{Since $b<u_e<\alpha_1$,  condition \ref{hcon_2} gives that $g(u)-g(u_e)\leq 0$ and condition \ref{hcon_1}  implies $f(u)>0$ for $u\in (\alpha_1,\alpha_2)$. Therefore, the integrand satisfies
$$f(u)\big[g(u)-g(u_e)\big]r_2(u)^N\leq 0, u\in (\alpha_1,\alpha_2).$$
Consequently, combining with $S(u_c)>0$ and $\frac{a}{N}>0$, $I_4 \leq 0$  holds as well.}

{In summary, we obtain
$$
P_1(u_c) - S(u_c)P_2(u_c) = I_1 + I_2 + I_3 + I_4 < 0,
$$
contradicting \eqref{eq:20240923-1402} established in \textbf{Step 1} and completing the proof.}
\hfill$\Box$
\medskip

\section*{Acknowledgements}

{Xuexiu Zhong was supported by the NSFC (No.12271184), Young Top-notch Talent Project of Guangdong Province (2024TQ08A725), Guangdong Basic and Applied Basic Research Foundation (2021A1515010034), Guangzhou Basic and Applied Basic Research Foundation(2024A04J10001).}

{Jianjun Zhang was supported by the NSFC (No.12371109).}

P. Pucci is a member of the {\em Gruppo Nazionale per
l'Analisi Ma\-te\-ma\-ti\-ca, la Probabilit\`a e le loro Applicazioni} (GNAMPA) of the {\em Instituto Nazionale di Alta Matematica} (INdAM)
and this paper was written under the auspices of GNAMPA--INdAM.

This research was completed when P. Pucci was visiting the {Sichuan University}, Chengdu, under the invitation of Professor S. Zhang and supported by a grant from the {\it State Administration of Foreign Experts Affairs} (No. S20240071), titled {\it `` The Study for a few Important Nonlinear Equations"}.

{The authors are extremely grateful to  the anonymous reviewer for his/her valuable comments and suggestions, which improved the quality of the paper.}


\begin{thebibliography}{10}

\bibitem{Adachi2018}
S. Adachi and M. Shibata, A note on the uniqueness and the non-degeneracy of positive radial solutions for semilinear elliptic problems and its application, Acta Math. Sci. Ser. B (Engl. Ed.) 38 (4) (2018),  1121--1142.


\bibitem{Agu}
M. Agueh, Sharp Gagliardo-Nirenberg inequalities via $p$-Laplacian
type equations, NoDEA Nonlinear Differential Equations Appl., 15 (2008),
457--472.

\bibitem{BN}
F. Bass and N. Nasonov,  Nonlinear electromagnetic-spin waves, Phys.
Rep., 189 (1990), 165--223.

 \bibitem{BL}
H. Berestycki and P.-L. Lions, Nonlinear scalar field equations, I. Existence
of a ground state, Arch. Rational Mech. Anal., 82 (1983), 313--345.


\bibitem{Citti-1993}
{G. Citti, A uniqueness theorem for radial ground states of the equation $\Delta_p u+f(u)=0$, Boll. Un. Mat. Ital. B  7 (1993), no.2, 283--310.}

\bibitem{Coffman-1972}
{C.V. Coffman, Uniqueness of the ground state solution for $\Delta u-u+u^3=0$
and a variational characterization of other solutions, Arch. Rational Mech. Analysis
\textbf{46} (1972), 81--95.}

\bibitem{Cohen-Li-Schlag-2024}
{A. Cohen, Z. Li, and W. Schlag, Uniqueness of excited states to $-\Delta u+u-u^3=0$ in three dimensions, Anal. PDE \textbf{17} (2024), 1887--1906.}



\bibitem{CGHY-2011}
{C. Cort\'azar, M. Garc\'ia-Huidobro and C.S. Yarur,
On the uniqueness of sign changing bound state solutions of a semilinear equation,
Ann. Inst. H. Poincar\'e C Anal. Non Lin\'eaire, \textbf{28} (4) (2011),  599--621.}





\bibitem{Franchi-Lanconelli-Serrin-1996}
{B. Franchi, E. Lanconelli and J. Serrin, Existence and uniqueness of nonnegative solutions of quasilinear equations in $\mathbb{R}^N$, Advances in Mathematics, \textbf{118} (1996), 177--243.}






\bibitem{Jeanjean-Zhang-Zhong-2025}
L.~Jeanjean, J.~J.~Zhang and X.~X.~Zhong,
\newblock Existence and limiting profile of energy ground states  for a quasi-linear Schr\"odinger equations: Mass super-critical case.
Preprint.



\bibitem{Kwong-1989}
{M.K. Kwong,Uniqueness of positive solutions of $\Delta u-u+u^p=0$ in $\mathbb{R}^N$,
Arch. Rational Mech. Anal. \textbf{105}(3) (1989), 243--266.}





\bibitem{Mcleod-Serrin-1987}
{K. Mcleod and J. Serrin, Uniqueness of positive radial solutions of $\Delta u+f(u)=0$
in $\mathbb{R}^n$, Arch. Rational Mech. Anal. \textbf{99} (1987), 115--145. }

\bibitem{Peletier-Serrin-1983}
{L.A. Peletier and J. Serrin, Uniqueness of positive solutions of semilinear equations in $\mathbb{R}^N$, Arch. Ration. Mech. Anal. \textbf{81} (1983), 181--197.}





\bibitem{Pucci-Serrin1998}
P. Pucci and J. Serrin, Uniqueness of ground states for quasilinear elliptic operators, Indiana Univ. Math. J. {\bf 47} (1998), 501--528.



\bibitem{SeT}
J. Serrin and M. Tang, Uniqueness of ground states for quasilinear elliptic
equations, Indiana Univ. Math. J., 49 (2000), 897--923.



\bibitem{Tang-2001}
M. Tang, Uniqueness and global structure of positive radial solutions for quasilinear elliptic equations, Comm. Partial Differential Equations, 26 (2001), no.5-6, 909--938.

\bibitem{Tang-2026}
{M. Tang, Uniqueness of bound states to $\Delta u-u+|u|^{p-1}u=0$ in $\mathbb{R}^{n}$ , $n\geq 3$, Invent. math. \textbf{243} (2026), 245--291. \url{https://doi.org/10.1007/s00222-025-01379-0}}





\bibitem{Weinstein-1983}
{M.I. Weinstein, Nonlinear Schr\"odinger equations and sharp interpolation estimates, Commun. Math. Phys., 87 (1983), 567--576.}


\end{thebibliography}
\end{document}